\newtheorem{theorem}{Theorem}[section]
\newtheorem{lemma}[theorem]{Lemma}
\theoremstyle{definition}
\theoremstyle{remark}
\numberwithin{equation}{section}
\newcommand{\rr}{{\mathbb R}}
\newcommand{\rd}{{\mathbb R^d}}
\newcommand{\nat}{{\mathbb N}}
\newcommand{\Exp}{{\mathbb E}}
\newcommand{\Ker}{\operatorname{Ker}}
\newcommand{\eqd}{\stackrel{\rm d}{=}}
\newcommand{\Prob}{\mathbb P}
\begin{document}

\sloppy
\title[Hausdorff dimension of the graph]{Hausdorff dimension of the graph of an operator semistable L\'evy process} 

\author{Lina Wedrich}
\address{Lina Wedrich, Mathematisches Institut, Heinrich-Heine-Universit\"at D\"usseldorf, Universit\"atsstr. 1, D-40225 D\"usseldorf, Germany}
\email{lina.wedrich\@@{}uni-duesseldorf.de} 

\date{\today}

\begin{abstract}
Let $X=\{X(t):t\geq0\}$ be an operator semistable L\'evy process in $\rd$ with exponent $E$, where $E$ is an invertible linear operator on $\rd$. For an arbitrary Borel set $B\subseteq\rr_+$ we interpret the graph $Gr_X(B)=\{(t,X(t)):t\in B\}$ as a semi-selfsimilar process on $\rr^{d+1}$, whose distribution is not full, and calculate the Hausdorff dimension of $Gr_X(B)$ in terms of the real parts of the eigenvalues of the exponent $E$ and the Hausdorff dimension of $B$. We use similar methods as applied in \cite{MX} and \cite{KW}.
\end{abstract}

\keywords{operator semistable L\'evy process, sample path, semi-selfsimilarity, graph, Hausdorff dimension}
\subjclass[2010]{Primary 60G51; Secondary 28A78, 28A80, 60G17, 60G18, 60G52} 
\thanks{This work has been supported by Deutsche Forschungsgemeinschaft (DFG) under grant KE1741/6-1}

\maketitle

\baselineskip=18pt

% ------------------ Introduction ---------------------------------

\section{Introduction}

Let $X=(X(t))_{t\geq0}$ be a L\'evy process in $\rd$. Namely, $X$ is a stochastically continuous process with c\`adl\`ag paths that has stationary and independent increments and starts in $X(0)=0$ almost surely. The distribution of $X$ is uniquely determined by the distribution of $X(1)$ which can be an arbitrary infinitely divisible distribution. The process $X$ is called $(c^E,c)$-operator semistable, if the distribution of $X(1)$ is full, i.e. not supported on any lower dimensional hyperplane, and there exists a linear operator $E$ on $\rd$ such that
\begin{equation}\label{opsem}
\left\{X(ct)\right\}_{t\geq0}\stackrel{\text{fd}}{=} \left\{c^E X(t)\right\}_{t\geq0}\quad \text{ for some } c>1.
\end{equation}
Here $\stackrel{\text{fd}}{=}$ denotes equality of all finite dimensional distributions and 
$$c^E:=\sum_{n=0}^\infty \frac{(\log c)^n}{n!}E^n.$$
If for some $\alpha\in(0,2]$ the exponent $E$ is a multiple of the identity, i.e. $E=\alpha\cdot I$, we call the process $(c^{1/\alpha},c)$-semistable. The L\'evy process is called operator stable if \eqref{opsem} holds for all $c>0$.

The aim of this paper is to calculate the Hausdorff dimension $\dim_H Gr_X(B)$ of the graph $Gr_X(B)=\{(t,X(t)): t\in B\}$ of an operator semistable L\'evy process $X=(X(t))_{t\geq0}$ for an arbitrary Borel set $B\subseteq\rr_+$.

For an arbitrary subset $F$ of $\rd$ the $s$-dimensional Hausdorff measure $\mathcal{H}^s(F)$ is defined as
\begin{align*}
	\mathcal{H}^s(F)=\lim_{\delta\rightarrow0}\inf\left\{\sum_{i=1}^{\infty}|F|_i^s: |F_i|\leq\delta \mbox{ and } F\subseteq\bigcup_{i=1}^\infty F_i\right\},
\end{align*}
where $|F|=\sup\{\|x-y\|:x,y\in F\}$ denotes the diameter of a set $F\subseteq\mathbb{R}^d$ and $\|\cdot\|$ is the Euclidean norm. It can be shown that the value $\dim_{H} F=\inf\left\{s:\mathcal{H}^s(F)=0\right\}=\sup\left\{s:\mathcal{H}^s(F)=\infty\right\}$ exists and is unique for all subsets $F\subseteq\rd$. The critical value $\dim_{H} F$ is called the Hausdorff dimension of $F$. Further details on the Hausdorff dimension can be found in \cite{Fal} and \cite{Mat}.

In the past efforts have been made to generate dimension results for L\'evy processes, which fulfill certain scaling properties. An overview can for example be found in \cite{KX} or \cite{Xiao}. For an operator semistable L\'evy process $X$ and an arbitrary Borel set $B\subseteq\rr_+$ Kern and Wedrich \cite{KW} calculated the Hausdorff dimension of the range $\dim_{H}X(B)$ in terms of the real parts of the eigenvalues of the exponent $E$ and the Hausdorff dimension of $B$. The result is a generalization of the one stated in Meerschaert and Xiao \cite{MX}, who calculated the Hausdorff dimension $\dim_{ H} X(B)$ for an operator stable L\'evy process. For an arbitrary operator semistable L\'evy process $X$  our aim is to generalize the methods used to prove the results above by interpreting the graph $Gr_X(B)=\{(t,X(t)):t\in B\}$ as a process on $\rr^{d+1}$, which fulfills the scaling property \eqref{opsem} for a certain exponent but whose distribution is not full. The most prominent example of a semistable, non-stable distribution is perhaps the limit distribution of the cumulative gains in a series of St. Petersburg games. In this particular case, Kern and Wedrich \cite{KW2} already calculated the Hausdorff dimension $\dim_{H}Gr_X([0,1])$ of the corresponding graph over the interval $[0,1]$ employing the method described above. 
Furthermore, in the case that $X$ is a dilation stable L\'evy process on $\rd$, i.e. an operator stable L\'evy process with a diagonal exponent, Xiao and Lin \cite{LX} and Hou \cite{Hou} calculated the Hausdorff dimension $\dim_{H}Gr_X(B)$ for an arbitrary Borel set $B\subseteq\rr_+$

This paper is structured as follows: In Section 2.1 we recall spectral decomposition results from \cite{MS}, which enable us to decompose the exponent $E$ and thereby the operator semistable L\'evy process $X$ according to the distinct real parts of the eigenvalues of $E$. Section 2.2 contains certain uniformity and positivity results from \cite{KW} for the density functions of the process $X$, which will be helpful in the proofs of our main results. The main results on the Hausdorff dimension of the graph of an operator semistable L\'evy process are stated and proven in Section 3.

Throughout this paper $K$ denotes an unspecified positive and finite constant that can vary in each occurrence. Fixed constants will be denoted by $K_1, K_2$, etc.

\section{Preliminaries}

\subsection{Spectral decomposition}

Let $X$ be a $(c^E,c)$-operator semistable L\'evy process. Factor the minimal polynomial of $E$ into $q_1(x)\cdot\ldots\cdot q_p(x)$ where all roots of $q_i$ have roots with real parts equal to $a_i$ and $a_i<a_j$ for $i<j$. Let $\alpha_j = a_j^{-1}$ so that $\alpha_1>\ldots>\alpha_p$, and note that $0<\alpha_j\leq2$ by Theorem 7.1.10 in \cite{MS}. Define $V_j=\Ker(q_j(E))$. According to Theorem 2.1.14 in \cite{MS} $V_1\oplus\cdots\oplus V_p$ is then a direct sum decomposition of $\rd$ into $E$ invariant subspaces. In an appropriate basis, $E$ is then block-diagonal and we may write $E=E_1\oplus\cdots\oplus E_p$ where $E_j:V_j\rightarrow V_j$ and every eigenvalue of $E_j$ has real part equal to $a_j$. Especially, every $V_j$ is an $E_j$-invariant subspace of dimension $d_j= \dim V_j$ and $d=d_1+\ldots+d_p$. Write $X(t)=X^{(1)}(t)+\ldots+X^{(p)}(t)$ with respect to this direct sum decomposition, where by Lemma 7.1.17 in \cite{MS}, $\{X^{(j)}(t), t\geq0\}$ is a $(c^E_j,c)$-operator semistable L\'evy process on $V_j$. We can now choose an inner product $\langle\cdot,\cdot\rangle$ on $\rd$ such that the $V_j, j\in\{1, \ldots,p\}$, are mutually orthogonal and throughout this paper we will let $\|x\|^2=\langle x, x\rangle$  be the associated Euclidean norm. In particular we have for $t=c^r m >0$ that
\begin{align}
\|X(t)\|^2\eqd\|c^{rE}X(m)\|^2=\|c^{rE_1}X^{(1)}(m)\|^2+\ldots+\|c^{rE_p}X^{(p)}(m)\|^2,
\end{align}
with $r\in\mathbb{Z}$ and $m\in[1,c)$. 

The following lemma states a result on the growth behavior of the exponential operators $t^{E_j}$ near the origin $t=0$. It is a variation of Lemma 2.1 in \cite{MX} and a direct consequence of Corollary 2.2.5 in \cite{MS}.

\begin{lemma}\label{specbound}
	For every $j\in\{1,\ldots,p\}$ and every $\epsilon>0$ there exists a finite constant $K\geq1$ such that for all $0<t\leq1$ we have
	\begin{align}
	K^{-1}t^{a_j+\epsilon}\leq\|t^{E_j}\|\leq K t^{a_j-\epsilon}
	\end{align}
	and
	\begin{align}
	K^{-1}t^{-(a_j-\epsilon)}\leq\|t^{-E_j}\|
	\leq K t^{-(a_j+\epsilon)}.
	\end{align}
\end{lemma}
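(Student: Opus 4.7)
The plan is to deduce the lemma essentially directly from Corollary 2.2.5 of \cite{MS}, which gives the precise asymptotic growth of $t^{A}$ near $t=0$ and $t=\infty$ for any linear operator $A$ all of whose eigenvalues have a common real part. Since by construction every eigenvalue of $E_j$ has real part $a_j$, applying that corollary to $A = E_j$ acting on $V_j$ yields, for any fixed $\epsilon > 0$, the existence of constants $c_1, c_2 > 0$ and some $t_0 \in (0,1]$ such that
\[
c_1\, t^{a_j+\epsilon} \;\leq\; \|t^{E_j}\| \;\leq\; c_2\, t^{a_j-\epsilon} \qquad \text{for all } 0 < t \leq t_0,
\]
and analogously for $\|t^{-E_j}\|$, where the bound near $t=0$ is obtained from the behaviour of $s^{E_j}$ as $s\to\infty$ via the substitution $s = 1/t$ (so that the exponent of $t$ flips sign, accounting for the $-(a_j\pm\epsilon)$ exponents in the second displayed inequality of the lemma).

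To upgrade these bounds from the neighbourhood $(0,t_0]$ of the origin to the entire interval $(0,1]$, I would use a simple compactness argument: the maps $t \mapsto \|t^{E_j}\|$ and $t \mapsto \|t^{-E_j}\|$ are continuous and strictly positive on the compact interval $[t_0,1]$ (since $t^{E_j}$ is invertible for every $t > 0$), and hence attain finite positive minimum and maximum values there. These finitely many constants, together with $c_1, c_2$ and the analogous constants for $\|t^{-E_j}\|$, can all be absorbed into a single sufficiently large $K \geq 1$ so that the advertised inequalities hold uniformly for every $t \in (0,1]$ and simultaneously for both $\|t^{E_j}\|$ and $\|t^{-E_j}\|$.

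There is no substantive obstacle, since the hard content — that the slowly varying corrections arising from the Jordan structure of $E_j$ can be dominated by $t^{\pm\epsilon}$ — is the entire point of the cited Meerschaert--Scheffler corollary. The only points requiring care are (i) correctly tracking the sign of the exponent when passing from $\|t^{E_j}\|$ to $\|t^{-E_j}\|$, and (ii) ensuring that one common $K$ works for all four inequalities; both are handled by taking the maximum of the finitely many constants that appear in the preceding steps.
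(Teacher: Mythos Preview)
Your proposal is correct and takes exactly the approach the paper itself indicates: the paper does not give a separate proof but states the lemma as ``a variation of Lemma 2.1 in \cite{MX} and a direct consequence of Corollary 2.2.5 in \cite{MS}.'' Your filling in of the details---applying the Meerschaert--Scheffler corollary to $E_j$ and then extending from a neighbourhood of $0$ to all of $(0,1]$ by compactness---is precisely what is needed to justify that one-line reference.
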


Throughout this paper we will denote by $\alpha_j=1/a_j$ the reciprocals of the real parts of the eigenvalues of the exponent $E$ with $0<\alpha_p<\ldots<\alpha_1\leq2$.

\subsection{Properties of the density function}

The following three lemmas state uniformity results of operator semistable L\'evy processes. They will be very helpful in the proofs of our main theorems. The lemmas are taken from Kern and Wedrich \cite{KW}. Let $X=\{X(t)\}_{t\geq0}$ be an operator semistable L\'evy process on $\rd$ and $g_t, t> 0$, the corresponding continuous density functions. Lemma 2.2 in \cite{KW} states the following:

\begin{lemma}\label{uniconfun}
	The mapping $(t,x)\mapsto g_t(x)$ is continuous on $(0,\infty)\times \rd$ and we have
	\begin{align}
		\sup_{t\in[1,c)}\sup_{x\in\rd} |g_t(x)|<\infty.
	\end{align}
\end{lemma}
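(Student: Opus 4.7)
The plan is to prove both assertions simultaneously via Fourier inversion. Write $\hat\mu_t(\xi)=\Exp[e^{i\langle\xi,X(t)\rangle}]$ for the characteristic function of $X(t)$ and $\hat\mu_t=e^{-t\psi}$ where $\psi$ is the L\'evy exponent; because $X$ is a L\'evy process, $|\hat\mu_t(\xi)|^s=|\hat\mu_{ts}(\xi)|\le 1$ for all $s,t>0$.

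The key step, and the one I regard as the main obstacle, is to establish that $\hat\mu_1\in L^1(\rd)$. Fullness of the distribution of $X(1)$ ensures $\Re\psi(\xi)>0$ for $\xi\neq 0$, and the operator semistable scaling, pushed through the spectral decomposition $E=E_1\oplus\cdots\oplus E_p$ from Section~2.1, propagates this positivity in a quantitatively uniform way: each marginal $X^{(j)}$ on $V_j$ is itself a full semistable L\'evy process of index $\alpha_j\in(0,2]$, and classical decay estimates for semistable characteristic functions, combined with the growth bounds of Lemma~\ref{specbound}, force $|\hat\mu_1(\xi)|$ to decay sufficiently fast as $\|\xi\|\to\infty$ to be integrable. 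This integrability is part of the structure theory for full operator semistable laws and can be taken from \cite{MS}.

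Once $\hat\mu_1\in L^1(\rd)$ is available, Fourier inversion yields, for every $t\ge 1$,
\begin{equation*}
g_t(x)=\frac{1}{(2\pi)^d}\int_{\rd}e^{-i\langle x,\xi\rangle}\hat\mu_1(\xi)^t\,d\xi,
\end{equation*}
since $|\hat\mu_1(\xi)|^t\le|\hat\mu_1(\xi)|$ whenever $t\ge 1$. Dominated convergence with dominant $|\hat\mu_1|$ then delivers joint continuity of $(t,x)\mapsto g_t(x)$ on $[1,\infty)\times\rd$ and the uniform bound
\begin{equation*}
\sup_{t\in[1,c)}\sup_{x\in\rd}|g_t(x)|\le\frac{1}{(2\pi)^d}\int_{\rd}|\hat\mu_1(\xi)|\,d\xi<\infty,
\end{equation*}
which is the stated estimate.

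To extend continuity from $[1,\infty)\times\rd$ to the full set $(0,\infty)\times\rd$, I would use the semistable scaling one more time. Every $t>0$ admits a decomposition $t=c^{r}m$ with $r\in\ganz$ and $m\in[1,c)$, and \eqref{opsem} then gives
\begin{equation*}
g_t(x)=|\det(c^{-rE})|\,g_m(c^{-rE}x),
\end{equation*}
so joint continuity on $[1,c)\times\rd$ transfers to every strip $[c^{r},c^{r+1})\times\rd$, and continuity across the thresholds $t=c^r$ follows from the same scaling identity. The entire content of the lemma therefore reduces to the integrability of $\hat\mu_1$.
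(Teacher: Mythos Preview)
The paper does not prove this lemma itself; it simply quotes it as Lemma~2.2 of \cite{KW}. Your Fourier-inversion argument is correct and is in fact the standard route to such results for full operator semistable laws: integrability of $\hat\mu_1$ (available from the structure theory in \cite{MS}), dominated convergence for the joint continuity and the uniform bound on $[1,\infty)\times\rd$, and scaling to reach small~$t$. One minor remark: rather than transferring continuity strip by strip via the density scaling identity $g_t(x)=|\det(c^{-rE})|\,g_m(c^{-rE}x)$ and then checking the seams at $t=c^r$, it is slightly cleaner to observe that the same scaling relation on the Fourier side shows $\hat\mu_t\in L^1(\rd)$ for \emph{every} $t>0$, whence the inversion formula and dominated convergence (with local dominant $|\hat\mu_{t_0/2}|$ near any $t_0>0$) deliver joint continuity on all of $(0,\infty)\times\rd$ in one stroke.
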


As a consequence we get a result on the existence of negative moments of an operator semistable L\'evy process $X=\{X(t)\}_{t\geq0}$ on $\rd$ given in Lemma 2.3 of \cite{KW}.

\begin{lemma}\label{uniexp}
	For any $\delta\in(0,d)$ we have
	\begin{align}
	\sup_{t\in[1,c)} \Exp[\|X(t)\|^{-\delta}]<\infty.
	\end{align}
\end{lemma}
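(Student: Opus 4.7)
The plan is to represent the expectation as an integral against the density $g_t$ supplied by Lemma \ref{uniconfun} and then split it according to the size of $\|x\|$. Writing
\begin{align*}
\Exp[\|X(t)\|^{-\delta}]=\int_{\rd}\|x\|^{-\delta}\,g_t(x)\,dx
=\int_{\|x\|\leq 1}\|x\|^{-\delta}\,g_t(x)\,dx+\int_{\|x\|> 1}\|x\|^{-\delta}\,g_t(x)\,dx,
\end{align*}
the second integral is bounded by $\int_{\rd}g_t(x)\,dx=1$ since $\|x\|^{-\delta}\leq 1$ on $\{\|x\|>1\}$, uniformly in $t$.

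For the first integral I would invoke Lemma \ref{uniconfun} to extract the uniform bound $M:=\sup_{t\in[1,c)}\sup_{x\in\rd}g_t(x)<\infty$, giving
\begin{align*}
\int_{\|x\|\leq 1}\|x\|^{-\delta}\,g_t(x)\,dx\leq M\int_{\|x\|\leq 1}\|x\|^{-\delta}\,dx.
\end{align*}
Passing to polar coordinates in $\rd$, the last integral equals (up to a constant depending only on $d$) $\int_0^1 r^{d-1-\delta}\,dr$, which is finite precisely because the assumption $\delta<d$ ensures $d-1-\delta>-1$. Combining both pieces yields $\sup_{t\in[1,c)}\Exp[\|X(t)\|^{-\delta}]\leq 1+MK_d/(d-\delta)<\infty$.

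There is essentially no substantive obstacle here: Lemma \ref{uniconfun} already provides the uniformity in $t\in[1,c)$, and the integrability near the origin is a classical exponent computation. The only point requiring a little care is to make sure the constant $M$ is genuinely independent of $t$ over the (non-closed) interval $[1,c)$, which is exactly what Lemma \ref{uniconfun} states; the restriction to $[1,c)$ (rather than $[1,c]$) is harmless since the supremum there is what is claimed.
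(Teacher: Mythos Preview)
Your argument is correct and is the natural proof: the paper does not reproduce a proof here but simply cites Lemma 2.3 of \cite{KW}, and since the key input is precisely the uniform density bound of Lemma \ref{uniconfun} (Lemma 2.2 in \cite{KW}), your splitting-plus-polar-coordinates computation is essentially what one expects the cited proof to be.
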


Furthermore, we will need a uniform positivity result for the density functions taken from Lemma 2.4 of \cite{KW}.

\begin{lemma}\label{denspos}
	Let $\{X(t)\}_{t\geq0}$ be an operator semistable L\'evy process with $\alpha_1>1$, $d_1=1$ and with density $g_t$ as above. Then there exist constants $K>0$, $r>0$ and uniformly bounded Borel sets $J_t\subseteq \rr^{d-1}\cong V_2\oplus\cdots\oplus V_p$ for $t\in[1,c)$ such that
	\begin{align}
	g_t(x_1,\ldots,x_p)\geq K>0 \text{ for all } (x_1,\ldots,x_p)\in[-r,r]\times J_t.
	\end{align}
	Further, we can choose $\{J_t\}_{t\in[1,c)}$ such that $\lambda^{d-1}(J_t)\geq R$ for every $t\in[1,c)$. Note that the constants $K$, $r$ and $R$ do not depend on $t\in[1,c)$.
\end{lemma}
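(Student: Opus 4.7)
The structural backbone of the argument is the independence of the spectral components: by decomposing the L\'evy measure of an operator semistable process along the $E$-invariant subspaces $V_1,\ldots,V_p$ (the semistable counterpart of the operator stable decomposition in \cite{MS}), the one-dimensional process $X^{(1)}$ and the residual process $\tilde X := (X^{(2)},\ldots,X^{(p)})$ are independent. Writing their densities as $h_t$ on $V_1 = \rr$ and $\tilde g_t$ on $V_2\oplus\cdots\oplus V_p$, the joint density factors as $g_t(x_1,\tilde x) = h_t(x_1)\,\tilde g_t(\tilde x)$, and it therefore suffices to establish two separate uniform lower bounds: $h_t(x_1)\geq K_1$ on the fixed interval $[-r,r]$, and $\tilde g_t(\tilde x)\geq K_2$ on a uniformly bounded set $J_t$ with $\lambda^{d-1}(J_t)\geq R$.

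For the first bound, the hypotheses $d_1 = 1$ and $\alpha_1 > 1$ place us in the classical setting of a one-dimensional semistable L\'evy process of index $\alpha_1\in(1,2]$: its non-degenerate distribution has full support $\rr$, so the continuous density satisfies $h_t(0) > 0$ for every $t > 0$. Joint continuity of $(t,x_1)\mapsto h_t(x_1)$ on $(0,\infty)\times\rr$ from Lemma \ref{uniconfun} (applied to $X^{(1)}$), combined with the scaling identity $h_{ct}(x_1) = c^{-a_1}h_t(c^{-a_1}x_1)$ used to extend continuously to $t = c$, makes $t\mapsto h_t(0)$ a positive continuous function on the compact interval $[1,c]$, hence bounded below by some $K_1 > 0$; uniform continuity on a small rectangle then yields an $r > 0$ with $h_t(x_1)\geq K_1/2$ throughout $[1,c)\times[-r,r]$.

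For the second bound, Lemma \ref{uniconfun} supplies $M := \sup_{t\in[1,c)}\sup_{\tilde x}\tilde g_t(\tilde x) < \infty$, and tightness of $\{\tilde X(t)\}_{t\in[1,c]}$ (again using the scaling identity to close the endpoint $t = c$) produces $R_0 > 0$ with $\int_{B_{R_0}}\tilde g_t(\tilde x)\,d\tilde x\geq 1/2$ uniformly in $t$. Setting $K_2 := 1/\bigl(4\lambda^{d-1}(B_{R_0})\bigr)$, the elementary splitting
\[
\frac{1}{2} \;\leq\; K_2\,\lambda^{d-1}(B_{R_0}) \;+\; M\cdot\lambda^{d-1}\bigl(B_{R_0}\cap\{\tilde g_t > K_2\}\bigr)
\]
forces $\lambda^{d-1}(J_t)\geq R := 1/(4M)$ for $J_t := B_{R_0}\cap\{\tilde g_t \geq K_2\}$, and by construction $J_t\subseteq B_{R_0}$ is uniformly bounded. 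Multiplying the two bounds gives $g_t \geq (K_1/2)\cdot K_2 =: K$ on $[-r,r]\times J_t$, as required.

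The main obstacle is the very first step: rigorously establishing the independence of the spectral components of an operator semistable L\'evy process and the full-support property of the one-dimensional marginal $X^{(1)}(t)$ under the hypothesis $\alpha_1 > 1$. Both are precisely where the assumptions $d_1 = 1$ and $\alpha_1 > 1$ are actually used; the remainder is a routine compactness, uniform-continuity, and Markov-splitting argument once these structural facts are in hand.
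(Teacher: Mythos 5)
There is a genuine gap, and it sits exactly where you placed your ``main obstacle'': the claimed independence of the spectral components, and hence the factorization $g_t(x_1,\tilde x)=h_t(x_1)\,\tilde g_t(\tilde x)$, is false in general. The spectral decomposition of \cite{MS} (Theorem 2.1.14, Lemma 7.1.17) only says that $\rd=V_1\oplus\cdots\oplus V_p$ and that each \emph{projection} $X^{(j)}=\pi_{V_j}(X)$ is again an operator semistable L\'evy process on $V_j$; it says nothing about the joint law. The L\'evy measure of $X$ is only required to satisfy the scaling relation $c^E\phi=c\,\phi$ and need not be concentrated on $\bigcup_j V_j$: for instance, in $\rr^2$ with $E=\operatorname{diag}(1/\alpha_1,1/\alpha_2)$ one can put the L\'evy measure on the curve $\{(s^{1/\alpha_1},s^{1/\alpha_2}):s>0\}$ with tail $s^{-1}$, producing a full operator (semi)stable law whose two spectral components jump simultaneously and are maximally dependent. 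This is why the present paper (and \cite{KW}, from which the lemma is quoted without proof here) always works with the \emph{joint} density $g_t$ and needs precisely a lower bound on a product set $[-r,r]\times J_t$, rather than multiplying two marginal bounds. Your concluding multiplication $g_t\ge (K_1/2)\cdot K_2$ therefore does not follow.

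The remaining ingredients of your argument are sound and are the right tools, but they must be rearranged to act on $g_t$ directly. A correct route: (1) the one-dimensional marginal density $h_t(x_1)=\int_{\rr^{d-1}}g_t(x_1,\bar x)\,d\bar x$ of $X^{(1)}(t)$ satisfies $h_t(0)\ge\kappa>0$ uniformly for $t\in[1,c)$ --- this is where $d_1=1$ and $\alpha_1>1$ enter, since for index $>1$ a nondegenerate one-dimensional semistable law has $0$ in the interior of its support (note that full support alone does not force a continuous density to be positive at a given point, so this step needs the positivity theory for such laws, not just continuity); (2) tightness plus the uniform bound $M=\sup_{t,x}g_t(x)$ from Lemma \ref{uniconfun} applied to the \emph{slice} $\bar x\mapsto g_t(0,\bar x)$ yields, by exactly your Markov-splitting computation, a uniformly bounded set $J_t$ with $\lambda^{d-1}(J_t)\ge R$ on which $g_t(0,\cdot)\ge 2K$; (3) uniform continuity of $(t,x)\mapsto g_t(x)$ on a compact set of the form $[1,c]\times[-1,1]\times \overline{B}_{R_0}$ (closing the endpoint $t=c$ by the scaling identity, as you do) then supplies an $r>0$, independent of $t$, with $g_t\ge K$ on all of $[-r,r]\times J_t$. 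In short: your steps two and three survive, but they must be applied to the joint density along the hyperplane $\{x_1=0\}$ and then thickened, not to a product of marginals.
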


% ------------------ Main results --------------------------------

\section{Main results}

%%%%%%%%%%%%%%%%%%%%%%%%%%%%%%%%%%%%%%%
The following two Theorems are the main results of this paper. The constants $\alpha_1, \alpha_2$ and $d_1$ are defined as in Section 2.1 by means of the spectral decompostition.

\begin{theorem}\label{mainresult1}
	Let $X=\{X(t), t\in\rr_+\}$ be an operator semistable L\'evy process on $\rd$ with $d\geq2$. Then for any Borel set $B\subseteq\rr_+$ we have almost surely
	\begin{align*}
	\dim_H Gr_X(B) = \left\{
	\begin{array}{ll} \dim_H B \cdot \max(\alpha_1,1), & \text{ if } \alpha_1\dim_H B \leq d_1, \\
	1+\max(\alpha_2, 1)\cdot(\dim_H B - \frac{1}{\alpha_1}), & \text{ if } \alpha_1\dim_H B > d_1.\end{array}
	\right.
	\end{align*}
	
\end{theorem}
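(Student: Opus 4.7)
The plan is to establish the formula by proving matching upper and lower bounds for $\dim_H Gr_X(B)$, following the strategy of \cite{MX} and \cite{KW} but adapted to the graph $Gr_X(B) \subseteq \rr^{d+1}$. Throughout, one exploits the decomposition $X(t) = X^{(1)}(t) + \cdots + X^{(p)}(t)$ into spectrally independent components, so that the oscillations in the $V_j$-direction are governed by the index $\alpha_j = 1/a_j$, while the ``time'' coordinate has implicit scaling index $1$.

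For the upper bound I would fix $\eta > \dim_H B$ and, for each large $n$, cover $B$ by a family of intervals of length $c^{-n}$ whose cardinality is at most $K c^{n\eta}$. Using \eqref{opsem} componentwise together with Lemma \ref{specbound}, standard modulus-of-continuity estimates for L\'evy processes show that on each interval $I_{k,n}$ one has $\sup_{s,t \in I_{k,n}}\|X^{(j)}(t)-X^{(j)}(s)\| \leq K (c^{-n})^{a_j - \epsilon}$ outside an event whose probability can be controlled via stationarity and the scaling relation. Consequently $Gr_X(I_{k,n})$ lies in a box of edge lengths $c^{-n}$ in the time direction and $(c^{-n})^{a_j - \epsilon}$ in each $V_j$-direction, and can be covered by cubes of suitable common side length; the number of cubes needed depends on which of the exponents $1, a_1,\ldots,a_p$ is smallest and on whether the spatial diameter of $X^{(1)}(I_{k,n})$ has already exhausted $V_1$. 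Multiplying by the number of intervals, summing, and applying Borel--Cantelli to pass from ``with high probability'' to ``almost surely'' yields an upper bound in each regime; letting $\eta \downarrow \dim_H B$ and $\epsilon \downarrow 0$ gives the claimed formula as an upper estimate.

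For the lower bound I would use the potential-theoretic energy method. For any $\eta < \dim_H B$, Frostman's lemma provides a probability measure $\mu$ on $B$ with $\mu(B(x,r)) \leq K r^{\eta}$. Lift $\mu$ to a measure $\nu$ on $Gr_X(B)$ via $t \mapsto (t,X(t))$ and, for $\gamma$ strictly less than the claimed dimension, bound the expected $\gamma$-energy
\begin{align*}
\Exp\!\left[\int\!\!\int \frac{d\nu(x)\,d\nu(y)}{\|x-y\|^{\gamma}}\right] = \int\!\!\int \Exp\!\left[\bigl((t-s)^2 + \|X(|t-s|)\|^2\bigr)^{-\gamma/2}\right] d\mu(s)\,d\mu(t),
\end{align*}
using the stationarity of increments and the relation $X(t)-X(s) \eqd X(|t-s|)$. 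Writing $u = |t-s| = c^r m$ with $m \in [1,c)$, the inner expectation reduces via \eqref{opsem} to an integral against the bounded density $g_m$ (Lemma \ref{uniconfun}), which one splits according to which spatial scale $\|c^{rE_j}y\|$ dominates. Lemma \ref{specbound} then yields a polynomial bound $\Exp[(u^2+\|X(u)\|^2)^{-\gamma/2}] \leq K u^{-h(\gamma)}$ with exponent $h(\gamma)$ matching each regime, Lemma \ref{uniexp} guaranteeing finiteness of the auxiliary negative moments. Plugging in and using $\mu(B(x,r)) \leq K r^{\eta}$ gives convergence whenever $h(\gamma)<\eta$, so that $\dim_H Gr_X(B) \geq \gamma$ almost surely by the standard energy criterion.

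The principal obstacle is to obtain the correct exponent $h(\gamma)$ across the four subregimes determined by $\alpha_1 \lessgtr 1$, $\alpha_2 \lessgtr 1$ and the two cases of the theorem, so that it matches the upper bound. The extra time coordinate, whose implicit scaling index $1$ interleaves with the spectral indices $a_1 < \cdots < a_p$, forces a careful case distinction about which term dominates $(u^2+\|X(u)\|^2)^{1/2}$ on the relevant subregion. When $d_1 = 1$ and $\alpha_1 > 1$, the low-dimensional first component must be handled separately, and the uniform positivity of the density from Lemma \ref{denspos} is indispensable for preventing the bound on $h(\gamma)$ from being lossy. The borderline values $\alpha_1 = 1$ or $\alpha_2 = 1$, where $\max(\alpha_j,1)$ switches behaviour, are treated by an $\epsilon$-perturbation argument relying again on Lemma \ref{specbound}.
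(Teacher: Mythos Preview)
Your lower-bound strategy via Frostman's lemma and the energy method is essentially what the paper does for cases (ii) and (iv); for cases (i) and (iii) the paper takes the shortcut $\dim_H Gr_X(B)\geq\dim_H X(B)$ via the Lipschitz projection onto $\rd$ and then quotes \eqref{DimensionRange}, which spares one the harder energy computation in those regimes. One remark: Lemma~\ref{denspos} is not used in the paper's energy estimate at all---it enters only the sojourn-time lower bound, i.e.\ the \emph{upper} bound for the dimension. The energy integrals are controlled with Lemma~\ref{uniconfun} (density bounded above) and Lemma~\ref{uniexp} (negative moments), so your attribution of Lemma~\ref{denspos} to ``preventing the bound on $h(\gamma)$ from being lossy'' is misplaced.

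Your upper-bound argument, however, has a genuine gap and also diverges from the paper's method. First, you propose to cover $B$ by at most $Kc^{n\eta}$ intervals of common length $c^{-n}$ whenever $\eta>\dim_H B$. That is a bound on the upper box-counting dimension, not the Hausdorff dimension; for an arbitrary Borel set $B\subseteq\rr_+$ such equal-length coverings need not exist with the stated cardinality. The paper works instead with a genuine $\epsilon$-cover $\{I_i\}$ of variable lengths satisfying $\sum_i|I_i|^{\gamma'}<1$. Second, rather than modulus-of-continuity bounds and Borel--Cantelli (which are delicate for c\`adl\`ag processes with heavy-tailed jumps, since the oscillation $\sup_{s,t\in I}\|X(t)-X(s)\|$ is governed by jump tails), the paper follows Pruitt--Taylor: it views $Z(t)=(t,X(t))$ as a L\'evy process in $\rr^{d+1}$, proves sharp lower bounds for the expected sojourn time $\Exp[T_Z(a,s)]$ in each of the four regimes (Theorem~\ref{ST}; this is where Lemma~\ref{denspos} is actually needed, for the cases $\alpha_1>d_1=1$), and then applies the covering Lemma~\ref{PT} to bound the expected number of $b_i$-cubes hit by $Z$ on each $I_i$. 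Summing the expected $s$-sums and applying Fatou yields the upper bound without any a.s.\ path-regularity input. If you want to salvage your approach you would need to (a) switch to variable-length covers and (b) replace the pathwise oscillation bound by an expectation bound on the number of covering cubes---at which point you are essentially redoing the sojourn-time argument.
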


%%%%%%%%%%%%%%%%%%%%%%%%%%%%%%%%%%%%%%%%%%%%%%%%%%%%%%

The dimension result for the one-dimensional case reads as follows:

\begin{theorem}\label{mainresult2}
	Let $X=\{X(t), t\in\rr_+\}$ be a ($c^{1/\alpha},c$)-semistable L\'evy process on $\rr$. Then for any Borel set $B\subseteq\rr_+$ we have almost surely
	$$
	\dim_HGr_X(B) = \left\{
	\begin{array}{ll} \dim_H B \cdot \max(\alpha,1), & \text{ if } \alpha\dim_H B \leq 1, \\
	1+\dim_H B - \frac{1}{\alpha}, & \text{ if } \alpha\dim_H B > 1.\end{array}
	\right.
	$$
\end{theorem}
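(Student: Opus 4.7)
The strategy is to establish matching upper and lower bounds on $\dim_H Gr_X(B)$ via a covering argument and a potential-theoretic (Frostman) argument, respectively, paralleling the proof of Theorem~\ref{mainresult1} in the special case $d = p = d_1 = 1$. The absence of an $\alpha_2$ term in the second branch reflects the fact that there is no secondary spectral component to exploit in the one-dimensional setting. By countable stability of Hausdorff dimension together with monotonicity of both branches in $\dim_H B$, one may assume $B \subseteq [0, T]$ for some fixed $T > 0$.

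For the upper bound I partition $[0, T]$ into intervals $I_i^{(n)}$ of length $c^{-n}$. By the semistability $X(ct) \eqd c^{1/\alpha} X(t)$, the oscillation of $X$ over $I_i^{(n)}$ is of order $c^{-n/\alpha}$, with $\varepsilon$-losses absorbed via Lemma~\ref{specbound}. I would use two competing coverings of each graph piece $Gr_X(B \cap I_i^{(n)})$: first, a single ball of diameter of order $c^{-n\min(1, 1/\alpha)}$, which after summing $\gamma$-th powers and taking expectations, combined with standard Frostman-type estimates on the number of $c^{-n}$-intervals needed to cover $B$ for any $s > \dim_H B$, yields the upper bound $\dim_H B \cdot \max(\alpha, 1)$; and second, when $\alpha > 1$, a cover of the enclosing rectangle of dimensions $c^{-n} \times c^{-n/\alpha}$ by $\lceil c^{n(1-1/\alpha)}\rceil$ squares of side $c^{-n}$, yielding $1 + \dim_H B - 1/\alpha$. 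Taking the minimum of the two bounds reproduces the branch formula.

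For the lower bound, fix $\gamma$ strictly below the claimed dimension and, by Frostman's lemma, pick a Borel probability measure $\nu$ on $B$ with finite $s$-energy for some $s$ close to $\dim_H B$. Let $\mu$ be the pushforward of $\nu$ under $t \mapsto (t, X(t))$; it suffices to show $\Exp[I_\gamma(\mu)] < \infty$. By Fubini and the stationary-increment property, this reduces to bounding $\Exp[(h^2 + X(h)^2)^{-\gamma/2}]$ as a function of $h = |t-s|$. Writing $h = c^r m$ with $r \in \ganz$ and $m \in [1, c)$ and using semi-selfsimilarity, one controls the integrand by the trivial bound $h^{-\gamma}$ when $\alpha \leq 1$, or via $\Exp[|X(h)|^{-\gamma}] \leq K h^{-\gamma/\alpha}$ from Lemma~\ref{uniexp} when $\alpha > 1$ and $\gamma < 1 = d$ (which is automatic throughout the first branch). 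In each subcase the resulting exponent of $h$ is strictly less than $s$, so the $\gamma$-energy integrates.

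The main obstacle is the second branch $\alpha \dim_H B > 1$, where $\gamma$ may exceed $d = 1$ and Lemma~\ref{uniexp} no longer applies directly. I plan to handle this by splitting the expectation according to whether $|X(h)| \leq h$ or $|X(h)| > h$. On the former event, $(h^2 + X(h)^2)^{-\gamma/2} \leq h^{-\gamma}$, and the uniform density bound of Lemma~\ref{uniconfun} combined with scaling yields $\Prob(|X(h)| \leq h) \leq K h^{1 - 1/\alpha}$. On the latter, a change of variables via $X(h) \eqd c^{r/\alpha} X(m)$ converts the singular integral $\int x^{-\gamma} g_h(x)\,dx$ into one over $[c^{r(1-1/\alpha)} m, \infty)$ against the uniformly bounded density $g_m$, whose dominant contribution near $0$ is of order $(c^{r(1-1/\alpha)})^{1-\gamma}$. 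The two pieces combine to $\Exp[(h^2 + X(h)^2)^{-\gamma/2}] \leq K h^{1 - \gamma - 1/\alpha}$, so $\Exp[I_\gamma(\mu)]$ is finite as soon as $\gamma - 1 + 1/\alpha < s$; letting $s \uparrow \dim_H B$ yields the sharp lower bound $\gamma < 1 + \dim_H B - 1/\alpha$.
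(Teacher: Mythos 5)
Your lower-bound argument is essentially the paper's and is sound: it is the Frostman energy method driven by the uniform density bound of Lemma~\ref{uniconfun} and the scaling $X(h)\eqd c^{r/\alpha}X(m)$, and your splitting of $\Exp[(h^2+X(h)^2)^{-\gamma/2}]$ at the event $\{|X(h)|\le h\}$ is a cosmetic variant of the integration by parts performed in part (iv) of Lemma~\ref{lowerbounds}; both yield the exponent $h^{1-\gamma-1/\alpha}$. The genuine gap is in your upper bound, where you discard the paper's machinery (the Pruitt--Taylor counting Lemma~\ref{PT} fed with the sojourn-time lower bounds of Theorem~\ref{ST}) in favour of an oscillation-based covering. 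A non-Gaussian semistable process ($\alpha<2$) has jumps, and its oscillation over an interval $I$ is heavy-tailed: $\Exp\bigl[\sup_{t\in I}|X(t)-X(\inf I)|^{\gamma}\bigr]=\infty$ whenever $\gamma\ge\alpha$. Consequently the claim that $Gr_X(B\cap I_i^{(n)})$ sits in a single ball of diameter of order $c^{-n\min(1,1/\alpha)}$, or in a rectangle of height $c^{-n/\alpha}$, fails pathwise on the event of a large jump, and the expectation $\Exp\bigl[\sum_i \operatorname{diam}\bigl(Gr_X(I_i)\bigr)^{\gamma}\bigr]$ that your first covering needs is infinite exactly in the regime where that covering is supposed to be used: in the first branch with $\alpha\le1$ you must take $\gamma$ arbitrarily close to $\dim_H B$, which can reach or exceed $\alpha$ (take $\alpha=1/2$ and $B=[0,1]$, where already $\Exp[|X(1)|^{1/2}]=\infty$). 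Replacing the single ball by a stack of squares does not help here either, since that requires $\Exp[\operatorname{osc}(X,I_i)]<\infty$, i.e.\ $\alpha>1$. The whole point of Lemma~\ref{PT} is that the \emph{number of cubes hit} by $Z$ over $I_i$ remains controlled in expectation even when the diameter of the image does not, because the process skips the cubes it jumps across; no positive moment of $X$ enters, only a lower bound on $\Exp[T_Z(a,s)]$.

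A second, independent defect: you partition $[0,T]$ into intervals of the fixed length $c^{-n}$ and invoke a bound on ``the number of $c^{-n}$-intervals needed to cover $B$'' for $s>\dim_H B$. No such bound exists; that quantity is governed by the upper box-counting dimension, which can strictly exceed $\dim_H B$ (e.g.\ $B=\{0\}\cup\{1/k:k\ge1\}$ has Hausdorff dimension $0$ and upper box dimension $1/2$), so a uniform mesh can only ever prove the theorem with $\dim_H B$ replaced by $\dimu_B B$. As in Lemma~\ref{upperbounds}, one must take an efficient covering $\{I_i\}$ of $B$ by intervals of \emph{varying} lengths with $\sum_i|I_i|^{\gamma'}<1$ and apply the per-interval hitting estimate at the individual scale $b_i$ tied to $|I_i|$. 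Both problems are resolved simultaneously by the paper's route, which for $d=1$ amounts to: apply Lemma~\ref{PT} to $Z(t)=(t,X(t))$ on each $I_i$ with cubes of side $b_i=|I_i|^{1/\max(\alpha,1)}$ in the first branch and $b_i=|I_i|$ in the second, and insert the lower bounds for $\Exp[T_Z(a,s)]$ from Theorem~\ref{ST} (parts (i), (ii) and the analogue of (iv), which do not use fullness and survive the passage to the degenerate process $Z$).
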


% ------------------Proofs ---------------------------------------

Let $X=(X(t))_{t\geq0}$ be a $(c, c^E)$-operator semistable L\'evy process on $\rd$ and let $\alpha_1>\ldots>\alpha_p$ denote the reciprocals of the real parts of the eigenvalues of $E$ as defined in Section 2.1. We want to calculate the Hausdorff dimension of the graph $Gr_X(B)$ of $X$ for an arbitrary Borel set $B\subseteq\rr_{+}$. Therefore, we define the process $Z = (Z(t))_{t\geq0}$ as $Z(t)=(t, X(t))$ for all $t\geq0$. This gives us $\dim_H Z(B) = \dim Gr_X(B)$. One can easily see that $Z$ is also a L\'evy process and fulfills the scaling property of a $(c,c^F)$-operator semistable process where
$$
F = \left( \begin{array}{cc}
1 & 0 \\
0 & E \end{array} \right).
$$
Nevertheless, the process $Z$ itself is not operator semistable in the sense of the definition given in the Introduction as the distribution of $Z(1)$ is obviously not full.

As mentioned in the Introduction, the Hausdorff dimension $\dim_H X(B)$ of the range of an operator semistable L\'evy process $X$ has already been calculated in \cite{KW} as
\begin{equation}\label{DimensionRange}
\dim_{\rm H}X(B)=
\begin{cases} 
\alpha_1\dim_{\rm H}B & \text{if } \alpha_1\dim_{\rm H}B \leq d_1, \\
1+\alpha_2\left(\dim_{\rm H}B-\frac1{\alpha_1}\right) & \text{if } \alpha_1\dim_{\rm H}B > d_1,
\end{cases}
\end{equation}
almost surely for $d\geq 2$.
Hence, for the reasons mentioned above, we are now able to use the parts of the result \eqref{DimensionRange} and the corresponding proofs where fullness of the process was not required. All other parts, however, have to be calculated anew.

The proof of Theorem 3.1 is split into two parts. First we will obtain the upper bounds for $\dim_HGr_X(B)$ by choosing a suitable sequence of coverings. This method goes back to Pruitt and Taylor \cite{PT} and Hendricks \cite{H}. Afterwards we will use standard capacity arguments in order to prove the lower bounds.

\subsection{Upper Bounds}

For a L\'evy process $\{X(t)\}_{t\geq0}$ let
\begin{align}
T_X(a,s)=\int_0^s 1_{B(0,a)}(X(t))dt
\end{align}
be the sojourn time in the closed ball $B(0,a)$ with radius $a$ centered at the origin up to time $s>0$.

 The following covering lemma is due to Pruitt and Taylor \cite[Lemma 6.1]{PT}
 
 \begin{lemma}\label{PT}
 	Let $Z=\{Z(t)\}_{t\geq0}$ be a  L\'evy process in $\rr^{d+1}$ and let $\Lambda(a)$ be a fixed $K_{1}$-nested family of cubes in $\rr^{d+1}$ of side $a$ with $0<a\leq1$. For any $u\geq0$ let $M_{u}(a,s)$ be the number of cubes in $\Lambda(a)$ hit by $Z(t)$ at some time $t\in [u,u+s]$. Then
 	$$\Exp\left[M_{u}(a,s)\right]\leq 2\,K_{1}s\cdot\left(\Exp\left[T_Z\left(\tfrac{a}{3},s\right)\right]\right)^{-1}.$$
 \end{lemma}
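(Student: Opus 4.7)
The plan is to dominate the expected number of hit cubes by comparing two different ways of accounting for the time the process spends near each cube it enters. For $i=1,\dots,N$ where $N=M_u(a,s)$, let $\tau_i=\inf\{t\in[u,u+s]:Z(t)\in Q_i\}$ (set $\tau_i=\infty$ if $Z$ does not enter $Q_i$ during $[u,u+s]$). Each $\tau_i$ is a stopping time. Introduce the auxiliary quantity
\begin{equation*}
\rho_i=\int_{\tau_i}^{u+2s}\mathbf{1}_{B(Z(\tau_i),a/3)}(Z(t))\,dt,
\end{equation*}
with the convention $\rho_i=0$ if $\tau_i=\infty$. The whole argument will consist of bounding $\sum_i\rho_i$ from above pathwise and from below in expectation.

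For the lower bound, I would condition on $\{\tau_i<\infty\}$ and invoke the strong Markov property at $\tau_i$: since $\tau_i\le u+s$, there are at least $s$ units of time left, and the shifted process $\widetilde{Z}(r):=Z(\tau_i+r)-Z(\tau_i)$ is an independent copy of $Z$. Under this shift, $\{Z(\tau_i+r)\in B(Z(\tau_i),a/3)\}=\{\widetilde{Z}(r)\in B(0,a/3)\}$, so
\begin{equation*}
\Exp[\rho_i\mid\tau_i<\infty]\ \ge\ \Exp[T_Z(a/3,s)].
\end{equation*}
Summing over $i$ and using that $\Exp[M_u(a,s)]=\sum_i \Prob(\tau_i<\infty)$ yields $\Exp[\sum_i\rho_i]\ge\Exp[M_u(a,s)]\cdot\Exp[T_Z(a/3,s)]$.

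For the pathwise upper bound I would fix $t\in[u,u+2s]$ and count how many $i$ satisfy $Z(t)\in B(Z(\tau_i),a/3)$. Any such $i$ has $Z(\tau_i)\in Q_i$, so $Z(t)$ lies within distance $a/3$ of the cube $Q_i$. The $K_1$-nested property of $\Lambda(a)$ is precisely the geometric statement that any point of $\rr^{d+1}$ can be within distance $a/3$ of at most $K_1$ cubes of side $a$ from the family, so $\sum_i\mathbf{1}_{B(Z(\tau_i),a/3)}(Z(t))\le K_1$ uniformly in $t$. Integrating over $t\in[u,u+2s]$ gives $\sum_i\rho_i\le 2K_1s$ pointwise, hence in expectation. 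Combining the two bounds and dividing by $\Exp[T_Z(a/3,s)]$ produces the claimed inequality. The delicate step is the combinatorial count in the upper bound, which depends on correctly interpreting $K_1$-nestedness; the lower bound, by contrast, is a routine application of the strong Markov property and translation invariance of the Lévy process.
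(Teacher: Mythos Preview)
The paper does not supply its own proof of this lemma; it simply attributes the result to Pruitt and Taylor \cite[Lemma~6.1]{PT}. Your argument is essentially the classical Pruitt--Taylor proof: define the first-entry stopping times into each cube, use the strong Markov property and spatial homogeneity of the L\'evy process to bound the expected remaining time spent in a small ball after entry from below by $\Exp[T_Z(a/3,s)]$, and bound the total pathwise from above using the nestedness of the cube family over the window of length $2s$.

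One small point worth tightening: you index the hit cubes as $Q_1,\dots,Q_N$ with $N=M_u(a,s)$ random, and then assert that each $\tau_i$ is a stopping time. For that to be literally true you should enumerate \emph{all} cubes of $\Lambda(a)$ deterministically in advance, define $\tau_i$ for every such cube (with $\tau_i=\infty$ if the cube is missed), and note that $M_u(a,s)=\sum_i\mathbf{1}\{\tau_i<\infty\}$; the sums are then over a fixed countable index set and the stopping-time claim is immediate. Apart from this bookkeeping issue, your sketch matches the original argument.
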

 
 In order to prove the upper bounds of Theorem \ref{mainresult1} we now need to calculate sharp lower bounds of the expected sojourn times $\Exp[T_Z(a,s)]$ of the graph $Z=\{(t,X(t)), t\geq0\}$ of an operator semistable L\'evy process on $\rd$.

In their paper Kern and Wedrich \cite[Theorem 2.6]{KW} calculated upper and lower bounds for the expected sojourn times $\Exp[T_X(a,s)]$ of an operator semistable L\'evy process:

\begin{theorem}\label{rangesojournbounds}
	Let $X=\{X(t)\}_{t\geq0}$ be as in Theorem 3.1. For any $0<\alpha_{2}''<\alpha_{2}<\alpha_{2}'<\alpha_{1}''<\alpha_{1}<\alpha_{1}'$ there exist positive and finite constants $K_{6},\ldots,K_{9}$ such that
	\begin{itemize}
		\item[(i)] if $\alpha_{1} \leq d_{1}$, then for all $0<a\leq 1$ and $a^{\alpha_{1}}\leq s\leq1$ we have
		\begin{equation*}
		K_{6}a^{\alpha_{1}'}\leq\Exp[T_X(a,s)]\leq K_{7}a^{\alpha_{1}''}.
		\end{equation*}
		\item[(ii)] if $\alpha_{1}>d_{1}=1$, for all $0<a\leq a_{0}$ with $a_0>0$ sufficiently small, and all $a^{\alpha_{2}}\leq s \leq 1$ we have
		\begin{equation*}
		K_{8}a^{\rho'}\leq\Exp[T_X(a,s)]\leq K_{9}a^{\rho''},
		\end{equation*}
		where $\rho''=1+\alpha_{2}''(1-\frac{1}{\alpha_{1}})$ and $\rho'=1+\alpha_{2}'(1-\frac{1}{\alpha_{1}})$.
	\end{itemize}
\end{theorem}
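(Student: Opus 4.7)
The plan is to start from Fubini's identity
\begin{equation*}
\Exp[T_X(a,s)] = \int_0^s \Prob(\|X(t)\| \leq a)\, dt
\end{equation*}
and to estimate $\Prob(\|X(t)\|\leq a)$ by reducing $t$ to the fundamental domain $m\in[1,c)$ via the $(c^E,c)$-semistability. Writing $t=c^{r-1}m$ with $r\in\ganz$ and $m\in[1,c)$, the density change of variables gives
\begin{equation*}
\Prob(\|X(t)\|\leq a) = \int_{c^{-(r-1)E}B(0,a)} g_m(y)\,dy,
\end{equation*}
so that the shape and size of the ellipsoidal image $c^{-(r-1)E}B(0,a)$, controlled block by block along the spectral decomposition by Lemma~\ref{specbound} up to an arbitrary $\epsilon$-loss, is what drives the estimate.

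For the upper bound in case (i), where $\alpha_1\leq d_1$, I would combine the uniform density bound $\sup_{m\in[1,c)}\|g_m\|_\infty<\infty$ from Lemma~\ref{uniconfun} with $|\det c^{rE}|=c^{r\operatorname{tr}(E)}$ to obtain
\begin{equation*}
\Prob(\|X(t)\|\leq a) \leq K\,\lambda^d\bigl(c^{-(r-1)E}B(0,a)\bigr),
\end{equation*}
and then sum over $r$ with $c^{r-1}\leq s$; the dominant contribution comes from scales $c^r\asymp a^{\alpha_1}$, and the $\epsilon$-loss from Lemma~\ref{specbound} is absorbed into the exponent $\alpha_1''<\alpha_1$. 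For the upper bound in case (ii), with $\alpha_1>d_1=1$, the first spectral block has to be treated separately: the marginal of $X^{(1)}(t)$ on the one-dimensional subspace $V_1$ has density of order $t^{-1/\alpha_1}$, so $\Prob(\|X^{(1)}(t)\|\leq a)\lesssim a\,t^{-1/\alpha_1}$, while integrating the joint density of $(X^{(2)},\ldots,X^{(p)})$ over a ball of radius $a$ produces a factor that picks up the exponent $\alpha_2''$. Integrating in $t$ over the range $[a^{\alpha_2}, 1]$ then assembles to $a^{\rho''}$ with $\rho''=1+\alpha_2''(1-1/\alpha_1)$.

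The lower bounds come from restricting attention to a single range of $r$ on which $c^{-(r-1)E}B(0,a)$ contains a set of definite size, and then bounding $g_m$ from below on that set. In case (i), this can be done using Lemma~\ref{uniexp}: Markov's inequality applied to $\|X(m)\|^{-\delta}$ for some $\delta<d$ yields $\Prob(\|X(m)\|\leq\rho)\gtrsim\rho^d$ for small $\rho$, and combining with a time window of length $\asymp a^{\alpha_1}$ produces the bound $K_6\,a^{\alpha_1'}$. In case (ii) I would instead invoke Lemma~\ref{denspos}, which provides a uniform positive lower bound on the joint density $g_m$ on a set that is an interval in $V_1$ and has positive $(d-1)$-dimensional Lebesgue measure in $V_2\oplus\cdots\oplus V_p$; a single time scale $c^r\asymp a^{\alpha_2}$ then supplies the leading factor of $a$, while the contraction of $B(0,a)$ along $V_2\oplus\cdots\oplus V_p$ supplies the remaining $a^{\alpha_2'(1-1/\alpha_1)}$ after the $\epsilon$-loss. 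The main obstacle throughout case (ii) is that the spectral components $X^{(j)}$ are \emph{not} independent in general, so one cannot simply factor the density as a product of marginals; it is precisely the uniform positivity statement of Lemma~\ref{denspos} that lets one mimic, at the level of estimates, the conclusions that would be immediate in the independent case.
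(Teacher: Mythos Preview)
The paper does not prove this theorem; it is quoted from \cite[Theorem~2.6]{KW} without proof, so there is no in-paper argument to compare against. Judged on its own, your overall strategy---reduce $t$ to the fundamental domain $[1,c)$ via semistability, then control the scaled ball $c^{iE}B(0,a)$ block by block through Lemma~\ref{specbound}---is the right one, and your plan for case~(ii) via Lemma~\ref{denspos} matches the approach of \cite{KW} and of the analogous Theorem~\ref{ST} proved in this paper.

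There are, however, two concrete problems in your treatment of case~(i). First, for the upper bound, using the \emph{full} determinant $|\det c^{rE}|=c^{r\operatorname{tr}(E)}$ together with the full density $g_m$ yields after integration a bound of order $a^{d/\operatorname{tr}(E)}$, and in general $d/\operatorname{tr}(E)<\alpha_1$ when $p\geq 2$ (take e.g.\ $d_1=\alpha_1=2$, $d_2=1$, $\alpha_2=\tfrac12$, which gives $d/\operatorname{tr}(E)=1$). The sharp exponent $\alpha_1''$ requires projecting onto $V_1$ and bounding $\Prob(\|X(t)\|\leq a)\leq\Prob(\|X^{(1)}(t)\|\leq a)$ via the density of $X^{(1)}$ alone; this is legitimate because $X^{(1)}$ is itself operator semistable on $V_1$, so Lemma~\ref{uniconfun} applies to it. Second, for the lower bound, Markov's inequality applied to $\|X(m)\|^{-\delta}$ gives an \emph{upper} bound
\begin{equation*}
\Prob\bigl(\|X(m)\|\leq\rho\bigr)=\Prob\bigl(\|X(m)\|^{-\delta}\geq\rho^{-\delta}\bigr)\leq\rho^{\delta}\,\Exp\bigl[\|X(m)\|^{-\delta}\bigr],
\end{equation*}
not the lower bound you claim, so Lemma~\ref{uniexp} is of no use here. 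The correct route is tightness: by stochastic continuity the family $\{X(m):m\in[1,c]\}$ is tight, hence $\inf_{m\in[1,c)}\Prob(\|X(m)\|\leq R)\geq\tfrac12$ for some $R>0$; combined with the block-wise bounds $\|c^{-iE_j}\|\leq K\,c^{-i/\alpha_j'}$ from Lemma~\ref{specbound}, this gives $\Prob(\|X(t)\|\leq a)\geq\tfrac12$ whenever $t\leq K'a^{\alpha_1'}$, and integrating yields $\Exp[T_X(a,s)]\geq K_6\,a^{\alpha_1'}$.
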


Similarly to the theorem above we will now calculate lower bounds for the expected sojourn times $\Exp[T_Z(a,s)]$ of the graph $Z=\{(t,X(t)), t\geq0\}$ of an operator semistable L\'evy process on $\rd$. The upper bounds can also be calculated but are not stated here as they are not needed to determine the Hausdorff dimension.

%%%%%%%%%%%%%%%%%%%%%%%%%%%%%%%%%%%%%%%
\begin{theorem}\label{ST}
	
	Let $Z=\{(t,X(t)), t\geq 0\}$, where $X=\{X(t),t\geq0\}$ is as in Theorem 3.1.
	\begin{itemize}
		\item[(i)] If $\alpha_1\leq d_1$ and $\alpha_1\geq1$, then for all $0<a\leq1$ and $a^{\alpha_1}\leq s\leq1$ and any $\alpha_1<\alpha_1'$ there exists a positive and finite constant $K_2$ such that
		\begin{align*}
		\Exp[T_Z(a,s)]\geq K_2 a^{\alpha_1'}.
		\end{align*}
		\item[(ii)] If $\alpha_1\leq d_1$ and $\alpha_1<1$, then for all $0<a\leq1$ and $a\leq s\leq1$ and any $\epsilon>0$ there exists a positive and finite constant $K_3$ such that
		\begin{align*}
		\Exp[T_Z(a,s)]\geq K_3 a^{1+\epsilon}.
		\end{align*}
		\item[(iii)] If $\alpha_1> d_1=1$ and $\alpha_2\geq1$, then for any $0<\alpha_2<\alpha_2'<\alpha_1$ and all $a>0$ small enough, say $0<a\leq a_0$, and all $a^{\alpha_2}\leq s\leq1$ there exists a positive and finite constant $K_4$ such that
		\begin{align*}
		\Exp[T_Z(a,s)]\geq K_4 a^{1+\alpha_2'(1-\frac{1}{\alpha_1})}.
		\end{align*}
		\item[(iv)] If $\alpha_1> d_1=1$ and $\alpha_2<1$, then for all $a>0$ small enough, say $0<a\leq a_0$, and all $\frac{a}{\sqrt{p+1}}\leq s\leq1$, there exists a positive and finite constant $K_5$ such that
		\begin{align*}
		\Exp[T_Z(a,s)]\geq K_5 a^{2-\frac{1}{\alpha_1}}.
		\end{align*}
	\end{itemize}
\end{theorem}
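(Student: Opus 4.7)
Since $\|Z(t)\|^2 = t^2+\|X(t)\|^2$ and the event $\{Z(t)\in B(0,a)\}$ is empty for $t>a$, I would start from
\begin{equation*}
\Exp[T_Z(a,s)] = \int_0^{\min(s,a)}\Prob\bigl(t^2+\|X(t)\|^2\leq a^2\bigr)\,dt
\end{equation*}
and use the universal bound $T_Z(a,s)\geq T_X\!\left(a/\sqrt{2},\min(s,a/\sqrt{2})\right)$, which follows because $t\leq a/\sqrt{2}$ together with $\|X(t)\|\leq a/\sqrt{2}$ forces $\|Z(t)\|\leq a$. Cases (i) and (iii) reduce immediately to Theorem~\ref{rangesojournbounds}: in case (i), $\alpha_1\geq 1$ gives $(a/\sqrt{2})^{\alpha_1}\leq a/\sqrt{2}$ and $(a/\sqrt{2})^{\alpha_1}\leq a^{\alpha_1}\leq s$, so Theorem~\ref{rangesojournbounds}(i) applies at level $a/\sqrt{2}$; case (iii) proceeds identically via $\alpha_2\geq 1$ and Theorem~\ref{rangesojournbounds}(ii). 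The constants $\sqrt{2}^{-\alpha_1'}$ and $\sqrt{2}^{-\rho'}$ are absorbed into $K_2$ and $K_4$.

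Case (ii) cannot use this reduction, since $\alpha_1<1$ makes $(a/\sqrt{2})^{\alpha_1}>a/\sqrt{2}$ and so Theorem~\ref{rangesojournbounds}(i) no longer applies at level $a/\sqrt{2}$. I would instead argue directly by concentration: $a_1>1$, so writing $t=c^r m$ with $m\in[1,c)$, the scaling $X(t)\eqd c^{rE}X(m)$, Lemma~\ref{specbound}, and any fractional moment $\Exp[\|X(m)\|^\delta]<\infty$ with $\delta<\alpha_p$ combine to give $\Exp[\|X(t)\|^\delta]\leq K\,t^{(a_1-\epsilon)\delta}$. Markov then yields
\begin{equation*}
\Prob(\|X(t)\|>a/\sqrt{2})\leq K\,a^{-\delta}\,t^{(a_1-\epsilon)\delta},
\end{equation*}
and because $1/(a_1-\epsilon)<1$ this stays below $1/2$ throughout $t\in[0,a/\sqrt{2}]$ once $a$ is small. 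Integrating gives $\Exp[T_Z(a,s)]\geq a/(4\sqrt{2})\geq K_3\,a^{1+\epsilon}$ for any $\epsilon>0$.

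Case (iv) is where the real work lies, because $\alpha_2<1$ blocks both previous strategies and the joint density must enter. My plan is to integrate only over $t\in[0,a/\sqrt{p+1}]$, which controls the $t^2$ contribution to $\|Z(t)\|^2$ and explains the lower cut-off on $s$. Using $\|Z(t)\|^2 = t^2+\sum_{j=1}^p\|X^{(j)}(t)\|^2$ and the block-diagonal scaling $X^{(j)}(t)\eqd c^{rE_j}X^{(j)}(m)$, for $j\geq 2$ the inequality $a_j>1$ together with Lemma~\ref{specbound} gives $\|c^{rE_j}\|\leq K\,t^{a_j-\epsilon}=o(a)$; since the sets $J_m$ furnished by Lemma~\ref{denspos} are uniformly bounded, the event $(X^{(2)}(m),\ldots,X^{(p)}(m))\in J_m$ already forces $\|X^{(j)}(t)\|\leq a/\sqrt{p+1}$ for every $j\geq 2$ whenever $a$ is small enough. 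Applying the joint lower bound of Lemma~\ref{denspos} to the density of $X(m)$ then yields
\begin{equation*}
\Prob(\|Z(t)\|\leq a)\;\geq\;2KR\,\min\!\Bigl(\tfrac{a\,t^{-a_1}}{\sqrt{p+1}},\,r_0\Bigr).
\end{equation*}
Splitting the $t$-integral at the crossover $t_\ast\asymp a^{\alpha_1}$, the decaying regime dominates (one checks $2-1/\alpha_1<\alpha_1$), and
\begin{equation*}
\int_{a^{\alpha_1}}^{a/\sqrt{p+1}}K'\,a\,t^{-a_1}\,dt\;\asymp\;a\cdot a^{1-a_1}\;=\;a^{2-1/\alpha_1},
\end{equation*}
producing the bound $K_5\,a^{2-1/\alpha_1}$. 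The hardest step is this last one: bypassing independence of the $X^{(j)}$'s by using the joint density lower bound of Lemma~\ref{denspos}, while simultaneously absorbing the $p-1$ upper-block coordinates via the uniform boundedness of the sets $J_m$. The factor $\sqrt{p+1}$ in the statement of case (iv) is precisely the margin that makes this trade-off work.
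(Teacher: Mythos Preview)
Your proposal is correct and in several places more economical than the paper's argument. The key differences:

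\textbf{Parts (i) and (iii).} Your pointwise inequality $T_Z(a,s)\geq T_X\bigl(a/\sqrt{2},\min(s,a/\sqrt{2})\bigr)$ reduces both cases directly to Theorem~\ref{rangesojournbounds} for $X$, using only that $\alpha_1\geq1$ (resp.\ $\alpha_2\geq1$) forces $(a/\sqrt{2})^{\alpha_1}\leq a/\sqrt{2}$. The paper instead applies Theorem~\ref{rangesojournbounds}(i) to $Z$ itself (viewed as a non-full process with exponent $F$) for (i)--(ii), and for (iii) it repeats the entire density computation from scratch---essentially the same machinery as in (iv), with indices $i_2,i_3$ and a geometric sum. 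Your route for (iii) is substantially shorter.

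\textbf{Part (ii).} Here the paper's approach is simpler than yours: since $\alpha_1<1$, the largest reciprocal eigenvalue of $F$ is $\tilde\alpha_1=1$ with $\tilde d_1=1$, and Theorem~\ref{rangesojournbounds}(i) applied to $Z$ gives $\Exp[T_Z(a,s)]\geq K_3\,a^{1+\epsilon}$ immediately, for all $0<a\leq1$. Your Markov-inequality route works, but it needs the auxiliary uniform bound $\sup_{m\in[1,c)}\Exp[\|X(m)\|^\delta]<\infty$ (true but not among the stated lemmas), and as written it only covers small $a$; extending to all $a\leq1$ requires a separate compactness remark. None of this is wrong, just heavier than necessary.

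\textbf{Part (iv).} Your argument coincides with the paper's: restrict to $t\leq a/\sqrt{p+1}$, use $d_1=1$ to get exact scaling $|X^{(1)}(t)|\eqd c^{r a_1}|X^{(1)}(m)|$, absorb the $j\geq2$ blocks via the uniform boundedness of the $J_m$ and the fact that $a_j>1$, invoke Lemma~\ref{denspos} for the joint density, then integrate. The paper phrases the final step as a discrete sum over dyadic shells $c^{-i}\leq t<c^{-i+1}$ rather than your continuous integral, but the computation and the comparison $2-1/\alpha_1<\alpha_1$ are identical.
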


\begin{proof}

	%%%
	(i) \& (ii) Let $\alpha_1'>\alpha_1$. Looking at the proof of Theorem \ref{rangesojournbounds} part (i) (i.e. Theorem 2.6 part (i) in \cite{KW}) one realizes that the fullness is not needed there. Hence we can use this result to prove part (i) and (ii) of the present theorem. In order to do so we need to further examine the exponent	
		$$
		F = \left( \begin{array}{cc}
		1 & 0 \\
		0 & E \end{array} \right)
		$$
	of the process $Z$. Analogously to Section 2.1 denote by $\tilde{\alpha}_1>\ldots>\tilde{\alpha}_q$ the reciprocals of the real parts of the eigenvalues of $F$ and by $\tilde{d}_1$ the dimension of the $F_1$ invariant subspace of $\rr^{d+1}$, where $F_1$ is (analagously to $E_1$) the blockmatrix, whose eigenvalues have real part equal to $\tilde{\alpha}_1^{-1}$. Furthermore, let $\tilde{\alpha}_1'$ be such that $\tilde{\alpha}_1'=\tilde{\alpha}_1+\alpha_1'-\alpha_1$.
	
	In part (i) we have that $\alpha_1\leq d_1$ and $\alpha_1\geq1$. Then $\tilde{\alpha_1}=\alpha_1$ and $\tilde{d}_1\geq d_1$. All together we have $\tilde{\alpha}_1\leq\tilde{d}_1$ and by Theorem \ref{rangesojournbounds} there now exists a positive constant $K_2$ such that
	\begin{align*}
	\Exp[T_Z(a,s)]\geq K_2 a^{\tilde{\alpha}_1'} = K_2 a^{\alpha_1'}
	\end{align*}
	for all $0<a\leq1$ and $a^{\alpha_1}\leq s\leq1$.
	
	On the other hand in part (ii) we have $\alpha_1<1$. Then $\tilde{\alpha}_1=1$ and $\tilde{d}_1=1$, so that again $\tilde{\alpha}_1\leq\tilde{d}_1$. For any $\epsilon>0$, by Theorem \ref{rangesojournbounds} there now exitsts a postive constant $K_3$ such that
	\begin{align*}
	\Exp[T_Z(a,s)]\geq K_3 a^{\tilde{\alpha}_1+\epsilon} = K_3 a^{1+\epsilon}
	\end{align*}
	for all $0<a\leq1$ and $a\leq s \leq1$.
	
	%%%
	(iii) Let $0<\alpha_j<\alpha_j'<\alpha_{j-1}$ for all $j=2,\ldots,p$. Choose $i_0, i_1\in\nat_0$ such that $c^{-i_0}<a\leq c^{-i_0+1}$ and $c^{-i_1}<c^{-i_0 \alpha_2} \leq c^{-i_1+1}$. For $t\in(0,1]$ we can write $t=mc^{-i}$ with $m\in[1,c)$ and $i\in\nat_0$. By Lemma \ref{specbound} we then have
	\begin{equation}\begin{split}\label{ubXj}
	\|X^{(j)}(t)\| \stackrel{\rm d}{=} \|c^{-iE_j}X^{(j)}(m)\|\leq \|c^{-iE_j}\|\,\|X^{(j)}(m)\| \leq K\,c^{-i/{\alpha_{j}'}}\|X^{(j)}(c^it)\|
	\end{split}\end{equation}
	for all $j=1,\ldots,p$. Note that, since $d_1=1$, for $j=1$ in \eqref{ubXj} we can choose $K=1$ and $\alpha_1'=\alpha_1$. Furthermore, since $\alpha_2\geq1$ there exists a constant $a_0>0$ such that for all $0<a\leq a_0$ we have $a^{\alpha_2}\leq \frac{a}{\sqrt{p+1}}$. Altogether, for $0<a\leq a_0$ this gives us 
	\begin{align*}
	\Exp[T_Z(a,s)] & = \int_0^s \Prob\left(\|Z(t)\|<a\right)dt = \int_0^s \Prob\left(\|(t,X(t))\|<a\right)dt\\
	&\geq\int_0^s \Prob\left(|X^{(1)}(t)|<\frac{a}{\sqrt{p+1}}, \|X^{(j)}(t)\|<\frac{a}{\sqrt{p+1}},2\leq j\leq p, |t|<\frac{a}{\sqrt{p+1}}\right)dt\\
	&\geq \int_0^{a^{\alpha_2}} \Prob\left(|X^{(1)}(t)|<\frac{a}{\sqrt{p+1}},\|X^{(j)}(t)\|<\frac{a}{\sqrt{p+1}},2\leq j\leq p\right)dt\\
	&\geq \int_0^{c^{-i_1}} \Prob\left(|X^{(1)}(t)|<\frac{a}{\sqrt{p+1}},\|X^{(j)}(t)\|<\frac{a}{\sqrt{p+1}},2\leq j\leq p\right)dt\\
	& = \sum_{i=i_1+1}^{\infty}\int_{c^{-i}}^{c^{-i+1}} \Prob\left(|X^{(1)}(t)|<\frac{a}{\sqrt{p+1}},\|X^{(j)}(t)\|<\frac{a}{\sqrt{p+1}},2\leq j\leq p\right)dt\\
	&\geq \sum_{i=i_1+1}^{\infty}\int_{c^{-i}}^{c^{-i+1}}\Prob\left(|X^{(1)}(c^i t)|<\frac{c^{\frac{i}{\alpha_1}-i_0}}{\sqrt{p+1}},\|X^{(j)}(c^i t)\|<K^{-1}\frac{c^{\frac{i}{\alpha_j'}-i_0}}{\sqrt{p+1}},2\leq j\leq p\right)dt\\
	&\geq\sum_{i=i_1+1}^{\infty}c^{-i}\int_1^c\Prob\left(
	\begin{array}{ll}
	|X^{(1)}(m)|<\frac{c^{\frac{i}{\alpha_1}-i_0}}{\sqrt{p+1}} \text{ and }\\
	\|X^{(j)}(m)\|< K^{-1}\frac{c^{\frac{i}{\alpha_j'}-i_0}}{\sqrt{p+1}}, 2\leq j\leq p
	\end{array}\right)dm.
	\end{align*}
	By Lemma \ref{denspos} choose $K_{10}>0$, $r>0$ and uniformly bounded Borel sets $J_m\subseteq\rr^{d-1}$ with Lebesgue measure $0<K_9\leq \lambda^{d-1}(J_m)<\infty$ for every $m\in[1,c)$ such that the bounded continuous density $g_m(x_1,\ldots,x_p)$ of $X(m)=X^{(m)}+\ldots+X^{(p)}(m)$ fulfills
	\begin{align*}
	g_m(x_1,\ldots,x_p)\geq K_{10} >0 \quad\text{ for all }\quad (x_1,\ldots,x_p)\in[-r,r]\times J_m
	\end{align*}
	and for every $m\in[1,c)$. Since $\{J_m\}_{m\in[1,c)}$ is uniformly bounded by Lemma 2.4 we are able to choose $0<\delta\leq c^{-3}<1$ such that
	$$
	\bigcup_{m\in[1,c)}J_m \subseteq \left\{\|x_j\|\leq\frac{K^{-1}c^{\frac{-\alpha_1}{\alpha_p}}}{\delta\sqrt{p+1}}, 2\leq j\leq p\right\}.
	$$
	Let $\eta=c^{\frac{2}{\alpha_p}}/\left(r\sqrt{p+1}\right)$.
	
	Since $\alpha_1>\alpha_2'>1$ there exists a constant $a_0\in(0,1]$ such that $(\eta a)^{\alpha_1}<(\delta a)^{\alpha_2'}$ for all $0<a\leq a_0$. Now, choose $i_2, i_3\in\nat_0$ such that $c^{-i_2}<\left(\delta c^{-i_0+1}\right)^{\alpha_2'}\leq c^{-i_2+1}$ and $c^{-i_3}<\left(\eta c^{-i_0}\right)^{\alpha_1}\leq c^{-i_3+1}$. Note that
	\begin{align*}
	c^{-i_3}<\left(\eta c^{-i_0}\right)^{\alpha_1}<(\delta a)^{\alpha_2'}\leq \left(\delta c^{-i_0+1}\right)^{\alpha_2'} \leq c^{-i_2+1}
	\end{align*}
	and
	\begin{align*}
	&c^{-(i_1+1)}=c^{-2}\cdot c^{-i_1+1}\geq c^{-2}\cdot c^{-i_0\alpha_2} \geq \left(c^{-2}\cdot c^{-i_0}\right)^{\alpha_2}\geq \left(c^{-2}\cdot c^{-i_0}\right)^{\alpha_2'}\\ 
	&= \left(c^{-3}\cdot c^{-i_0+1}\right)^{\alpha_2'} \geq \left(\delta c^{-i_0+1}\right)^{\alpha_2'}> c^{-i_2},
	\end{align*}
	hence $i_3\geq i_2-1$ and $i_1+1\leq i_2$. We further have for all $i=i_2,\ldots,i_3+1$ and every $j=2,\ldots,p$
	\begin{equation}\label{up}
		\frac{c^{i/\alpha_1-i_{0}}}{\sqrt{p+1}}\leq \frac{c^{(i_3+1)/\alpha_1-i_{0}}}{\sqrt{p+1}}\leq\frac{c^{2/\alpha_1}(\eta c^{-i_{0}})^{-1}c^{-i_{0}}}{\sqrt{p+1}}=\frac{c^{2/\alpha_1}}{\eta\sqrt{p+1}}=r
	\end{equation}
	and, since $\alpha_2'\geq\alpha_j'$ for $j=2,\ldots,p$,
	\begin{equation}\begin{split}\label{down}
		\frac{c^{i/\alpha_j'-i_{0}}}{\sqrt{p+1}} & \geq \frac{c^{i_2/\alpha_j'-i_{0}}}{\sqrt{p+1}}\geq \frac{(\delta c^{-i_0+1})^{-\alpha_2'/\alpha_j'}c^{-i_0}}{\sqrt{p+1}}\\
		& =\frac{(\delta^{-1}c^{i_0-1})^{\alpha_2'/\alpha_j'}c^{-i_0}}{\sqrt{p+1}}\geq\frac{c^{-\alpha_2'/\alpha_j'}}{\delta\sqrt{p+1}} \geq\frac{c^{-\alpha_1/\alpha_p}}{\delta\sqrt{p+1}}.
	\end{split}\end{equation}
	Let $I_m=(-\frac{c^{i/\alpha_1-i_{0}}}{\sqrt{p+1}},\frac{c^{i/\alpha_1-i_{0}}}{\sqrt{p+1}})\times J_m$ then together with the calculations above, we get using \eqref{up} and \eqref{down}
	\begin{align*}
		\Exp[T(a,s)] & \geq \sum_{i=i_{2}}^{i_3+1}c^{-i}\int_{1}^{c}P\left(\begin{array}{c} |X^{(1)}(m)|<\frac{c^{i/\alpha_1-i_{0}}}{\sqrt{p+1}}\text{ and}\\ \|X^{(j)}(m)\|\leq K^{-1}\frac{c^{i/\alpha_j'-i_{0}}}{\sqrt{p+1}}, 2\leq j\leq p\end{array}\right)dm\\
		&\geq \sum_{i=i_{2}}^{i_{3}+1}c^{-i}\int_{1}^{c}\int_{I_m} g_{m}(x)\,dx\,dm
		\geq \sum_{i=i_{2}}^{i_{3}+1}c^{-i}(c-1)\,2\,\frac{c^{i/\alpha_1-i_{0}}}{\sqrt{p+1}}\cdot K_{10}\cdot K_9\\
		&= Kc^{-i_{0}}\sum_{i=i_{2}}^{i_{3}+1}\left(c^{-i}\right)^{1-\frac{1}{\alpha_{1}}}
		= Kc^{-i_{0}}\left(\frac{1 - \left(c^{-(i_{3}+2)}\right)^{1-\frac{1}{\alpha_{1}}}} {1-c^{\frac{1}{\alpha_{1}}-1}} - \frac{1-\left(c^{-i_{2}}\right)^{1-\frac{1}{\alpha_{1}}}} {1-c^{\frac{1}{\alpha_{1}}-1}}\right)\\
		& = Kc^{-i_{0}}\left(\left(c^{-i_{2}}\right)^{1-\frac{1}{\alpha_{1}}} -\left(c^{-(i_{3}+2)}\right)^{1-\frac{1}{\alpha_{1}}}\right)\\	
		& \geq K_{41}\left(c^{-i_{0}}\right)^{1+\alpha_2'\left(1-\frac{1}{\alpha_1}\right)} - K_{42}\left(c^{-i_{0}}\right)^{\alpha_{1}}.
	\end{align*}
	Since $1+\alpha_{2}'(1-\frac{1}{\alpha_{1}}) < 1+\alpha_{1}(1-\frac{1}{\alpha_{1}})=\alpha_{1}$ we have
	$\left(c^{-i_{0}}\right)^{\alpha_{1}-\left(1+\alpha_2'\left(1-\frac{1}{\alpha_1}\right)\right)}\to0$ if $a\to0$, i.e. $i_{0}\to\infty$. Hence we can further choose $a_{0}$ sufficiently small, such that	
	$$
	\Exp[T_Z(a,s)] \geq K_4 a^{1+\alpha_2'(1-\frac{1}{\alpha_1})}
	$$
	for all $0<a\leq a_0$.
	
	%%%
	(iv) Let $0<\alpha_j<\alpha_j'<\alpha_{j-1}$ for all $j=2,\ldots,p$, and 	additionally, let $\alpha_2<\alpha_2'<1$. Now choose $i_0, i_1\in\nat_0$ such that $c^{-i_0}<a\leq c^{-i_0+1}$ and $c^{-i_1}<\frac{a}{\sqrt{p+1}}\leq c^{-i_1+1}$. For $t\in(0,1]$ we can write $t=mc^{-i}$ with $m\in[1,c)$ and $i\in\nat_0$. By Lemma \ref{specbound} we then have
	\begin{equation}\begin{split}\label{ubXj}
	\|X^{(j)}(t)\| \stackrel{\rm d}{=} \|c^{-iE_j}X^{(j)}(m)\|\leq \|c^{-iE_j}\|\,\|X^{(j)}(m)\| \leq K\,c^{-i/{\alpha_{j}'}}\|X^{(j)}(c^it)\|
	\end{split}\end{equation}
	for all $j=1,\ldots,p$. Note that, since $d_1=1$, for $j=1$ in \eqref{ubXj} we can choose $K=1$ and $\alpha_1'=\alpha_1$. Altogether this gives us
	\begin{align*}
	\Exp[T_Z(a,s)] & = \int_0^s \Prob\left(\|Z(t)\|<a\right)dt = \int_0^s \Prob\left(\|(t,X(t))\|<a\right)dt\\
	&\geq\int_0^s \Prob\left(|X^{(1)}(t)|<\frac{a}{\sqrt{p+1}}, \|X^{(j)}(t)\|<\frac{a}{\sqrt{p+1}},2\leq j\leq p, |t|<\frac{a}{\sqrt{p+1}}\right)dt\\
	&= \int_0^{\frac{a}{\sqrt{p+1}}} \Prob\left(|X^{(1)}(t)|<\frac{a}{\sqrt{p+1}},\|X^{(j)}(t)\|<\frac{a}{\sqrt{p+1}},2\leq j\leq p\right)dt\\
	&\geq \int_0^{c^{-i_1}} \Prob\left(|X^{(1)}(t)|<\frac{a}{\sqrt{p+1}},\|X^{(j)}(t)\|<\frac{a}{\sqrt{p+1}},2\leq j\leq p\right)dt\\
	& = \sum_{i=i_1+1}^{\infty}\int_{c^{-i}}^{c^{-i+1}} \Prob\left(|X^{(1)}(t)|<\frac{a}{\sqrt{p+1}},\|X^{(j)}(t)\|<\frac{a}{\sqrt{p+1}},2\leq j\leq p\right)dt\\
	&\geq \sum_{i=i_1+1}^{\infty}\int_{c^{-i}}^{c^{-i+1}}\Prob\left(|X^{(1)}(c^i t)|<\frac{c^{\frac{i}{\alpha_1}-i_0}}{\sqrt{p+1}},\|X^{(j)}(c^i t)\|<K^{-1}\frac{c^{\frac{i}{\alpha_j'}-i_0}}{\sqrt{p+1}},2\leq j\leq p\right)dt\\
	&\geq\sum_{i=i_1+1}^{\infty}c^{-i}\int_1^c\Prob\left(
	\begin{array}{ll}
	|X^{(1)}(m)|<\frac{c^{\frac{i}{\alpha_1}-i_0}}{\sqrt{p+1}} \text{ and }\\
	\|X^{(j)}(m)\|< K^{-1}\frac{c^{\frac{i}{\alpha_j'}-i_0}}{\sqrt{p+1}}, 2\leq j\leq p
	\end{array}\right)dm.
	\end{align*}
	Analogously to the reasoning above, by Lemma \ref{denspos} choose $K_{10}>0$, $r>0$ and uniformly bounded Borel sets $J_m\subseteq\rr^{d-1}$ with Lebesgue measure $0<K_9\leq \lambda^{d-1}(J_m)<\infty$ for every $m\in[1,c)$ such that the bounded continuous density $g_m(x_1,\ldots,x_p)$ of $X(m)=X^{(m)}+\ldots+X^{(p)}(m)$ fulfills
	\begin{align*}
	g_m(x_1,\ldots,x_p)\geq K_{10} >0 \quad\text{ for all }\quad (x_1,\ldots,x_p)\in[-r,r]\times J_m
	\end{align*}
	and for every $m\in[1,c)$. Since $\{J_m\}_{m\in[1,c)}$ is uniformly bounded by Lemma 2.4 we are now able to choose $0<\delta\leq(\sqrt{p+1}\cdot c^3)^{-1}<1$ such that
	$$
	\bigcup_{m\in[1,c)}J_m \subseteq \left\{\|x_j\|\leq\frac{K^{-1}c^{\frac{-\alpha_1}{\alpha_p}}}{\delta\sqrt{p+1}}, 2\leq j\leq p\right\}.
	$$
	Let $\eta=c^{\frac{2}{\alpha_p}}/\left(r\sqrt{p+1}\right)$.
	
	Since $\alpha_1>1$ there exists a constant $0<a_0\leq1$ such that we have $(\eta a)^{\alpha_1}<\delta a$ for all $0<a\leq a_0$. Now, choose $i_2, i_3\in\nat_0$ such that $c^{-i_2}<\delta c^{-i_0+1}\leq c^{-i_2+1}$ and $c^{-i_3}<\left(\eta c^{-i_0}\right)^{\alpha_1}\leq c^{-i_3+1}$. Note that
	\begin{align*}
	c^{-i_3}<\left(\eta c^{-i_0}\right)^{\alpha_1}<\left(\eta a\right)^{\alpha_1}<\delta a\leq \delta c^{-i_0+1}\leq c^{-i_2+1}
	\end{align*}
	and, since $\delta\leq\frac{1}{\sqrt{p+1}}\cdot c^{-3}$,
	\begin{align*}
	c^{-(i_1+1)}=c^{-2}\cdot c^{-(i_1-1)} \geq c^{-2}\cdot\frac{a}{\sqrt{p+1}} > c^{-2}\cdot\frac{c^{-i_0}}{\sqrt{p+1}} =\frac{c^{-3}}{\sqrt{p+1}}\cdot c^{-i_0+1}\geq \delta c^{-i_0+1}>c^{-i_2}.
	\end{align*}
	Hence, we also get $i_2-1\leq i_3$ and $i_1+1\leq i_2$. Analogously to the calculations above, we further have for all $i=i_2,\ldots,i_3+1$ that 
	\begin{equation}\label{up2}
	\frac{c^{i/\alpha_1-i_{0}}}{\sqrt{p+1}}\leq r
	\end{equation}
	and, since $\alpha_j'<1$ for all $j=2,\cdot p$,
	\begin{equation}\begin{split}\label{down2}
	\frac{c^{i/\alpha_j'-i_{0}}}{\sqrt{p+1}} & \geq \frac{c^{i_2/\alpha_j'-i_{0}}}{\sqrt{p+1}}\geq \frac{(\delta c^{-i_0+1})^{-1/\alpha_j'}c^{-i_0}}{\sqrt{p+1}} =\frac{(\delta^{-1}c^{i_0-1})^{1/\alpha_j'}c^{-i_0}} {\sqrt{p+1}}\\
	& \geq\frac{c^{-1/\alpha_j'}}{\delta\sqrt{p+1}} \geq\frac{c^{-1/\alpha_p}}{\delta\sqrt{p+1}}\geq \frac{c^{-\alpha_1/\alpha_p}}{\delta\sqrt{p+1}}.
	\end{split}\end{equation}
	Define the subsets $\{I_m:m\in[1,c)\}\subseteq \rd$ as above. Similarly to the calculations above, using \eqref{up2} and \eqref{down2} we arrive at
	\begin{equation}
	\Exp[T_Z(a,s)] \geq Kc^{-i_{0}}\left(\left(c^{-i_{2}}\right)^{1-\frac{1}{\alpha_{1}}} -\left(c^{-(i_{3}+2)}\right)^{1-\frac{1}{\alpha_{1}}}\right)
	\end{equation}	
	Altogether, we get
	\begin{align*}
	\Exp[T_Z(a,s)]& 
	\geq  Kc^{-i_{0}}\left(\left(c^{-i_{2}}\right)^{1-\frac{1}{\alpha_{1}}} -\left(c^{-(i_{3}+2)}\right)^{1-\frac{1}{\alpha_{1}}}\right)\\
	&\geq K_{51}c^{-i_{0}}\left(c^{-i_0}\right)^{1-\frac{1}{\alpha_{1}}} - K_{52}c^{-i_{0}}\left(c^{-i_0}\right)^{\alpha_1-1}\\
	&= K_{51}\left(c^{-i_0}\right)^{2-\frac{1}{\alpha_{1}}}-K_{52}\left(c^{-i_0}\right)^{\alpha_1}.
	\end{align*}
	Since $\alpha_1>1$ and therefore $2-1/\alpha_1<1+(1-1/\alpha_1)<1+\alpha_1(1-1/\alpha_1)=\alpha_1$, similarly to the above, we can choose $a_0$ sufficiently small, such that
	\begin{align*}
	\Exp[T_Z(a,s)]\geq K_5 a^{2-\frac{1}{\alpha_1}}.
	\end{align*}
	for all $0<a\leq a_0$.
\end{proof}

Similarly to the proof of Lemma 3.4 in \cite{KW}, we can now find a suitable covering of $Z(B)$ and prove the desired upper bounds.

\begin{lemma}\label{upperbounds}
	Let $X=\{X(t), t\in\rr_+\}$ be an operator semistable L\'evy process on $\rd$ with $d\geq2$. Then for any Borel set $B\subseteq\rr_+$ we have almost surely
	\begin{align*}
	\dim_H Gr_X(B)\leq \left\{
	\begin{array}{ll} \alpha_1\dim_H B, & \text{ if } \alpha_1\dim_H B \leq d_1,\alpha_1\geq 1,\hfill (i) \\
	\dim_H B, & \text{ if } \alpha_1\dim_H B \leq d_1, \alpha_1< 1, \hfill (ii)\\
	1+\alpha_2(\dim_H B - \frac{1}{\alpha_1}), & \text{ if } \alpha_1\dim_H B > d_1, \alpha_1>\alpha_2\geq1,\hfill (iii)\\
	1+\dim_HB-\frac{1}{\alpha_1}, & \text{ if } \alpha_1\dim_H B > d_1, \alpha_1>1>\alpha_2.\hfill(iv)
	\end{array}
	\right.
	\end{align*}
\end{lemma}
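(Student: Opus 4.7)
The plan is to adapt the Pruitt--Taylor covering strategy from the proof of Lemma 3.4 in \cite{KW} to the graph process $Z(t)=(t,X(t))$, substituting the sojourn time estimates for $X$ by the ones for $Z$ from Theorem \ref{ST}. Since Hausdorff dimension is $\sigma$-stable under countable unions, we may assume $B\subseteq[0,N]$ for some $N>0$. Fix $\beta>\dim_H B$; by the definition of $\dim_H$, for every $n\in\nat$ there exists a cover $\{I_k^{(n)}=[u_k,u_k+s_k]\}_{k\geq1}$ of $B$ with $\sup_k s_k\leq1/n$ and $\sum_k s_k^\beta\leq1$.

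For each $I_k^{(n)}$ I choose a cube side $a_k=a_k(s_k)$ tuned to the scaling relevant in each of the four cases, and then apply Lemma \ref{PT}, producing a random cover of $Z(I_k^{(n)})$ by $M_k$ cubes of side $a_k$ with $\Exp[M_k]\leq 2K_1 s_k/\Exp[T_Z(a_k/3,s_k)]$. Specifically: in case (i) take $a_k=s_k^{1/\alpha_1}$, so that $s_k=a_k^{\alpha_1}$ and Theorem \ref{ST}(i) applies; in case (ii) take $a_k=s_k$ and invoke Theorem \ref{ST}(ii) with small $\epsilon>0$; in case (iii) take $a_k=s_k^{1/\alpha_2}$, so that $s_k=a_k^{\alpha_2}$ and Theorem \ref{ST}(iii) applies once $n$ is large enough to guarantee $a_k\leq a_0$; in case (iv) take $a_k=s_k\sqrt{p+1}$ so that Theorem \ref{ST}(iv) applies, again for $n$ large enough.

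Inserting the sojourn time lower bounds from Theorem \ref{ST} into $\Exp[M_k]$ and summing $\gamma$-powers of cube diameters, a routine bookkeeping computation yields in each case an inequality of the form
\[
\Exp\Bigl[\sum_k M_k\bigl(a_k\sqrt{d+1}\bigr)^\gamma\Bigr]\leq K\sum_k s_k^{\tau},
\]
with $\tau=(\gamma+\alpha_1-\alpha_1')/\alpha_1$ in case (i), $\tau=\gamma-\epsilon$ in case (ii), $\tau=(\gamma+\alpha_2-1-\alpha_2'(1-1/\alpha_1))/\alpha_2$ in case (iii), and $\tau=\gamma+1/\alpha_1-1$ in case (iv). For any $\gamma$ strictly larger than the claimed upper bound, the free parameters $\beta\downarrow\dim_H B$, $\alpha_1'\downarrow\alpha_1$, $\alpha_2'\downarrow\alpha_2$ and $\epsilon\downarrow0$ can be chosen so that $\tau\geq\beta$; since $s_k\leq 1$, this forces $\sum_k s_k^\tau\leq\sum_k s_k^\beta\leq 1$, so the expected $\gamma$-mass of the cover stays bounded uniformly in $n$.

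Since $\sup_k a_k\sqrt{d+1}\to 0$ as $n\to\infty$, the random family of cubes is admissible for the definition of $\mathcal{H}^\gamma$, and Fatou's lemma yields $\Exp[\mathcal{H}^\gamma(Z(B))]<\infty$, hence $\dim_H Gr_X(B)=\dim_H Z(B)\leq\gamma$ almost surely; letting $\gamma$ decrease to the claimed upper bound along a countable sequence closes each case. I expect the main obstacle to be the parameter bookkeeping in case (iii), where the two scaling exponents $\alpha_1$ and $\alpha_2$ interact nontrivially and one must simultaneously enforce $a_k=s_k^{1/\alpha_2}\leq a_0$, the scaling identity $s_k=a_k^{\alpha_2}$, and an exponent matching that produces precisely $1+\alpha_2(\dim_H B-1/\alpha_1)$; the remaining three cases follow the same template with simpler arithmetic.
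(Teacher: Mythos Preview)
Your proposal follows essentially the same Pruitt--Taylor covering strategy as the paper, with the same choices of cube side $a_k$ in each case, and your exponent bookkeeping for $\tau$ is correct. There is, however, one genuine gap in case (i).

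In case (i) the hypothesis is $\alpha_1\dim_H B\leq d_1$ and $\alpha_1\geq1$, which does \emph{not} imply $\alpha_1\leq d_1$; indeed if $\dim_H B$ is small one can have $\alpha_1>d_1=1$. But Theorem \ref{ST}(i) is stated only under the assumption $\alpha_1\leq d_1$, so your appeal to it is not justified in the subcase $\alpha_1>d_1$. The paper closes this gap by observing that when $\alpha_1>d_1=1$, the sojourn bounds from Theorem \ref{ST}(iii) and (iv) are in fact stronger than what is needed: since $1+\alpha_2'(1-1/\alpha_1)<1+\alpha_1(1-1/\alpha_1)=\alpha_1<\alpha_1'$ and $2-1/\alpha_1<\alpha_1<\alpha_1'$, one still has $\Exp[T_Z(a,s)]\geq K a^{\alpha_1'}$ for small $a$ and $a^{\alpha_1}\leq s\leq1$, and the rest of your argument for (i) then goes through unchanged. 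You should add a sentence covering this subcase; otherwise the proof is correct and matches the paper's route.
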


\begin{proof}
(i)  Assume $\alpha_1\dim_H B\leq d_1$ and $\alpha_1\geq 1$. First, we consider the case where $\alpha_1\leq d_1$. For $\gamma>\dim_{H}B$ choose $\alpha_{1}'>\alpha_{1}$ such that $\gamma'=1-\frac{\alpha_{1}'}{\alpha_{1}}+\gamma>\dim_{H}B$. For any $\varepsilon\in(0,1]$, by definition of the Hausdorff dimension,  there now exists a sequence $\{I_{i}\}_{i\in\mathbb{N}}$ of intervals in $\rr_{+}$ of length $|I_{i}|<\varepsilon$ such that
$$B\subseteq \bigcup_{i=1}^{\infty}I_{i}\quad\text{ and }\quad\sum_{i=1}^{\infty}|I_{i}|^{\gamma'}<1.$$
Let $s_{i}:=|I_{i}|$ und $b_{i}:=|I_{i}|^{\frac{1}{\alpha_{1}}}$ then $(b_i/3)^{\alpha_1}<s_i$. It follows by Lemma \ref{PT} and Theorem \ref{ST} that $Z(I_{i})$ can be covered by $M_{i}$ cubes $C_{ij}\in \Lambda(b_{i})$ of side $b_{i}$ such that for every $i\in\nat$ we have
\begin{align*}
\Exp[M_{i}]\leq 2K_{1}s_{i}\left(\Exp\left[T_Z\left(\tfrac{b_{i}}{3},s_{i}\right)\right]\right)^{-1}\leq 2K_{1}s_{i}K_{2}^{-1}\left(\tfrac{b_{i}}{3}\right)^{-\alpha_{1}'}= K \,s_{i}b_{i}^{-\alpha_{1}'} = K \,|I_{i}|^{1-\frac{\alpha_{1}'}{\alpha_{1}}}.
\end{align*}
Note that $Z(B)\subseteq\bigcup_{i=1}^{\infty} \bigcup_{j=1}^{M_{i}}C_{ij}$, where $b_{i}\sqrt{d+1}$ is the diameter of $C_{ij}$. In other words, $\{C_{ij}\}$ is a $(\varepsilon^{1/\alpha_{1}}\sqrt{d+1})$-covering of $X(B)$. By monotone convergence we have
\begin{align*}
\Exp\left[\sum_{i=1}^{\infty}M_{i}b_{i}^{\alpha_{1}\gamma}\right] & = \sum_{i=1}^{\infty}\Exp\left[M_{i}b_{i}^{\alpha_{1}\gamma}\right]\leq \sum_{i=1}^{\infty} K \,|I_{i}|^{1-\frac{\alpha_{1}'}{\alpha_{1}}}\, |I_{i}|^{\gamma}= K\sum_{i=1}^{\infty}|I_{i}|^{\gamma'} \leq K.
\end{align*}
Letting $\varepsilon\to0$, i.e $b_{i}\to0$ and applying Fatou's lemma we get
\begin{align*}
& \Exp\left[\mathcal{H}^{\alpha_{1}\gamma}(X(B))\right] \leq \Exp\left[\liminf_{\varepsilon\to0} \sum_{i=1}^{\infty}\sum_{j=1}^{M_{i}}\left(b_{i}\sqrt{d+1}\right)^{\alpha_{1}\gamma}\right]\\
& \leq \liminf_{\varepsilon\to 0} \sqrt{d+1}^{\;\alpha_{1}\gamma}\Exp\left[\sum_{i=1}^{\infty}M_{i}b_{i}^{\alpha_{1}\gamma}\right]\leq\sqrt{d+1}^{\;\alpha_{1}\gamma} K < \infty,
\end{align*}
which shows that $\dim_{H}Z(B)\leq \alpha_{1}\gamma$ almost surely. And since $\gamma>\dim_{H}B$ is arbitrary, we get $\dim_{H}Z(B) \leq \alpha_{1}\dim_{H}B$ almost surely.

Now, assume that $\alpha_1\dim_H B\leq d_1$ and $\alpha_1\geq 1$ and $\alpha_1>d_1$. To be able to argue in the same way as before, we have to show that in case $\alpha_1>d_1$ the same lower bound $\Exp\left[T_Z\left(a,s\right)\right]\geq Ka^{\alpha_1'}$ holds for the expected sojourn time. By Theorem \ref{ST} we have for $1\leq\alpha_2<\alpha_2'<\alpha_1<\alpha_1'$
\begin{align*}
	\Exp\left[T_Z\left(a,s\right)\right]\geq K a^{1+\alpha_2'(1-\frac{1}{\alpha_1})}\geq K a^{1+\alpha_1(1-\frac{1}{\alpha_1})} = K a^{\alpha_1}\geq K a^{\alpha_1'}
\end{align*}
and for $0<\alpha_2'<\alpha_1<\alpha_1'$
\begin{align*}
\Exp\left[T_Z\left(a,s\right)\right]\geq K a^{2-\frac{1}{\alpha_1}} = K a^{1+1-\frac{1}{\alpha_1}}\geq K a^{1+\alpha_1(1-\frac{1}{\alpha_1})} \geq K a^{\alpha_1'}.
\end{align*}
Altogether, we get the desired lower bound for all $0<a\leq 1$ small enough and $a^{\alpha_1}\leq s\leq1$. Hence, as above the same conclusion $\dim_H Z(B)\leq \alpha_1\dim_H B$ holds almost surely.

(ii) Assume $\alpha_1\dim_H B\leq d_1$ and $\alpha_1<1\leq d_1$. For $\gamma>\dim_H B$, choose $\beta>1$ such that $\gamma'=1-\beta+\gamma>\dim_H B$. For $\varepsilon\in(0,1]$, define the sequence $\{I_i\}_{i\in\nat}$ of intervals as in part (i). Let $s_i=b_i:=|I_i|$; then $b_i/(3\sqrt{p+1})<s_i$. Again by Lemma \ref{PT} and Theorem \ref{ST} it follows that $Z(I_{i})$ can be covered by $M_{i}$ cubes $C_{ij}\in \Lambda(b_{i})$ of side $b_{i}$ such that for every $i\in\nat$ we have
\begin{align*}
\Exp[M_{i}]\leq 2K_{1}s_{i}\left(\Exp\left[T_Z\left(\tfrac{b_{i}}{3},s_{i}\right)\right]\right)^{-1}\leq 2K_{1}s_{i}K_{3}^{-1}\left(\tfrac{b_{i}}{3}\right)^{-\beta}= K \,s_{i}b_{i}^{-\beta} = K \,|I_{i}|^{1-\beta}.
\end{align*}
By monotone convergence we have
\begin{align*}
\Exp\left[\sum_{i=1}^{\infty}M_{i}b_{i}^{\gamma}\right] & = \sum_{i=1}^{\infty}\Exp\left[M_{i}b_{i}^{\gamma}\right]\leq \sum_{i=1}^{\infty} K \,|I_{i}|^{1-\beta}\, |I_{i}|^{\gamma}= K\sum_{i=1}^{\infty}|I_{i}|^{\gamma'} \leq K.
\end{align*}
Since $\gamma>\dim_H B$ and $\beta>1$ are arbitrary, with the same arguments as above we get $\dim_H Z(B)\leq \dim_H B$ almost surely.

(iii) Assume $\alpha_1\dim_H B>d_1$ and $\alpha_2\geq1$. Since $\dim_H B\leq 1$, we have $\alpha_1>d_1=1$. For $\gamma>\dim_{H}B$ choose $\alpha_{2}'>\alpha_{2}$ such that $\gamma'=1-\frac{\alpha_{2}'}{\alpha_{2}}+\frac{\alpha_{2}'}{\alpha_{2}}\gamma>\dim_{H}B$. For $\varepsilon\in(0,1]$ define $\{I_{i}\}_{i\in\mathbb{N}}$ as in part (i) and let $s_{i}:=|I_{i}|$ and $b_{i}:=|I_{i}|^{\frac{1}{\alpha_{2}}}$. Then $(b_i/3)^{\alpha_2}<s_i$. Again, by Lemma \ref{PT} and Theorem \ref{ST} it follows that $Z(I_{i})$ can be covered by $M_{i}$ cubes $C_{ij}\in \Lambda(b_{i})$ of side $b_{i}$ such that for every $i\in\nat$ we have
\begin{align*}
\Exp[M_{i}]&\leq 2K_{1}s_{i}\left(\Exp\left[T_Z\left(\tfrac{b_{i}}{3},s_{i}\right)\right]\right)^{-1}\leq 2K_{1}s_{i}K_{4}^{-1}\left(\tfrac{b_{i}}{3}\right)^{-1-\alpha_2'(1-\frac{1}{\alpha_1})}\\ &= K \,s_{i}b_{i}^{-1-\alpha_2'(1-\frac{1}{\alpha_1})} = K \,|I_{i}|^{1-\frac{1}{\alpha_2}-\frac{\alpha_2'}{\alpha_2}\cdot(1-\frac{1}{\alpha_1})}.
\end{align*}
By monotone convergence we have
\begin{align*}
\Exp\left[\sum_{i=1}^{\infty}M_{i}b_{i}^{1+\alpha_{2}'(\gamma-\frac1{\alpha_1})}\right] & \leq \sum_{i=1}^{\infty} K \,|I_{i}|^{1-\frac{1}{\alpha_2}-\frac{\alpha_2'}{\alpha_2}\cdot(1-\frac{1}{\alpha_1})}\, |I_{i}|^{\frac1{\alpha_2}+\frac{\alpha_{2}'}{\alpha_2}(\gamma-\frac1{\alpha_1})}= K\sum_{i=1}^{\infty}|I_{i}|^{\gamma'} \leq K.
\end{align*}
Since $\gamma>\dim_{H}B$ and $\alpha_2'>\alpha_2$ are arbitrary, with the same arguments as in part (i) we get $\dim_{H}Z(B) \leq 1+\alpha_2(\dim_{H}B-\frac1{\alpha_1})$ almost surely.

(iv) Assume $\alpha_1\dim_H B>d_1$ and $\alpha_2<1$. Since $\dim_H B\leq 1$, we have $\alpha_1>d_1=1$. Let $\gamma=\gamma'>\dim_{H}B$. For $\varepsilon\in(0,1]$ define $\{I_{i}\}_{i\in\mathbb{N}}$ as in part (i) and let $s_{i}:=|I_{i}|$ and $b_{i}:=|I_{i}|$. Then $b_i/(3\sqrt{p+1})<s_i$. Again, by Lemma \ref{PT} and Theorem \ref{ST} it follows that $Z(I_{i})$ can be covered by $M_{i}$ cubes $C_{ij}\in \Lambda(b_{i})$ of side $b_{i}$ such that for every $i\in\nat$ we have
\begin{align*}
\Exp[M_{i}]&\leq 2K_{1}s_{i}\left(\Exp\left[T_Z\left(\tfrac{b_{i}}{3},s_{i}\right)\right]\right)^{-1}\leq 2K_{1}s_{i}K_{5}^{-1}\left(\tfrac{b_{i}}{3}\right)^{-2+\frac{1}{\alpha_1}}= K \,s_{i}b_{i}^{-2+\frac{1}{\alpha_1}} = K \,|I_{i}|^{-1+\frac{1}{\alpha_1}}.
\end{align*}
By monotone convergence we have
\begin{align*}
\Exp\left[\sum_{i=1}^{\infty}M_{i}b_{i}^{1+\gamma-\frac1{\alpha_1}}\right] & \leq \sum_{i=1}^{\infty} K \,|I_{i}|^{-1+\frac{1}{\alpha_1}}\, |I_{i}|^{1+\gamma-\frac1{\alpha_1}}=K\sum_{i=1}^{\infty}|I_{i}|^{\gamma}= K\sum_{i=1}^{\infty}|I_{i}|^{\gamma'} \leq K.
\end{align*}
Since $\gamma>\dim_{H}B$ is arbitrary, we get $\dim_{H}Z(B) \leq 1+\dim_{H}B-\frac1{\alpha_1}$ almost surely.
\end{proof}

%%%%%%%%%%%%%%%%%%%%%%%%%%%%%%%%%%%%%%%%%%%%%%%%%%%%%%

\subsection{Lower Bounds}

In order to obtain the lower bounds of $\dim_H Gr_X(B)$ we apply Frostman's Lemma and Theorem and use the relationship between the Hausdorff dimension and the capacitary dimension (see \cite{Fal, Mat} for details).

\begin{lemma}\label{lowerbounds}
	Let $X=\{X(t), t\in\rr_+\}$ be an operator semistable L\'evy process on $\rd$ with $d\geq2$. Then for any Borel set $B\subseteq\rr_+$ we have almost surely
	\begin{align*}
	\dim_H Gr_X(B)\geq \left\{
	\begin{array}{ll} \alpha_1\dim_H B, & \text{ if } \alpha_1\dim_H B \leq d_1,\alpha_1\geq 1,\hfill (i) \\
	\dim_H B, & \text{ if } \alpha_1\dim_H B \leq d_1, \alpha_1< 1, \hfill (ii)\\
	1+\alpha_2(\dim_H B - \frac{1}{\alpha_1}), & \text{ if } \alpha_1\dim_H B > d_1, \alpha_1>\alpha_2\geq1,\hfill (iii)\\
	1+\dim_HB-\frac{1}{\alpha_1}, & \text{ if } \alpha_1\dim_H B > d_1, \alpha_1>1>\alpha_2.\hfill(iv)
	\end{array}
	\right.
	\end{align*}
\end{lemma}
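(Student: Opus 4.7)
The plan is to deduce cases (i)--(iii) from the monotonicity of Hausdorff dimension under Lipschitz maps together with the range formula \eqref{DimensionRange}, and then to treat the genuinely new case (iv) by the energy (capacity) method. The coordinate projections $(t,x)\mapsto x$ and $(t,x)\mapsto t$ from $Gr_X(B)$ onto $X(B)$ and onto $B$ are $1$-Lipschitz, so almost surely $\dim_H Gr_X(B) \geq \max(\dim_H X(B),\,\dim_H B)$. In case (i), since $\alpha_1\geq 1$, \eqref{DimensionRange} gives $\dim_H X(B) = \alpha_1\dim_H B \geq \dim_H B$. In case (ii) the $t$-projection already delivers $\dim_H Gr_X(B) \geq \dim_H B$. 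In case (iii), \eqref{DimensionRange} yields $\dim_H X(B) = 1 + \alpha_2(\dim_H B - 1/\alpha_1)$, the claimed target. These three cases are thus complete.

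In case (iv) ($\alpha_1 > 1 > \alpha_2$ and $\alpha_1\dim_H B > d_1 = 1$) one has $1 + \alpha_2(\dim_H B - 1/\alpha_1) < 1 + \dim_H B - 1/\alpha_1$, so the projection approach is too weak. I would apply Frostman's theorem. Pick $\gamma$ and $\beta$ with $1/\alpha_1 < \gamma < \dim_H B$ and $1 < \beta < 1 + \gamma - 1/\alpha_1$; both intervals are nonempty in this regime. By Frostman's lemma there is a compactly supported probability measure $\sigma$ on $B$ with $I_{\beta - 1 + 1/\alpha_1}(\sigma) < \infty$. Let $\mu = Z_\ast\sigma$ be the pushforward onto $Gr_X(B) = Z(B)$. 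It suffices to show $\Exp[I_\beta(\mu)] < \infty$: then $I_\beta(\mu) < \infty$ almost surely, and the capacity characterization of Hausdorff dimension forces $\dim_H Gr_X(B) \geq \beta$ almost surely. Running $\beta$ and $\gamma$ through countable sequences with $\beta \uparrow 1 + \dim_H B - 1/\alpha_1$ then concludes case (iv).

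The heart of the argument is the estimate
\[
\Exp\bigl[\bigl(\tau^2 + \|X(\tau)\|^2\bigr)^{-\beta/2}\bigr]
\leq \Exp\bigl[\bigl(\tau^2 + |X^{(1)}(\tau)|^2\bigr)^{-\beta/2}\bigr]
\leq K\,\tau^{\,1-\beta-1/\alpha_1},
\]
valid for $0 < \tau \leq T$. Combined with the stationary-increments identity $X(s)-X(t) \eqd X(|s-t|)$ and Fubini, it yields $\Exp[I_\beta(\mu)] \leq K\,I_{\beta-1+1/\alpha_1}(\sigma) < \infty$. The first inequality uses the orthogonal decomposition $V_1 \perp V_2\oplus\cdots\oplus V_p$ to discard the components $X^{(2)},\ldots,X^{(p)}$. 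For the second, I would write $\tau = c^{-i}m$ with $m \in [1,c)$, invoke the scaling $X^{(1)}(\tau) \eqd c^{-i/\alpha_1}X^{(1)}(m)$ together with the uniform density bound from Lemma \ref{uniconfun}, change variables in the density integral, and split it at $|z| = \tau^{1-1/\alpha_1}$; the hypothesis $\beta > 1$ is precisely what makes $\int_{|z|>\tau^{1-1/\alpha_1}}|z|^{-\beta}\,dz$ finite, and it is available because the target dimension exceeds $1$ whenever $\alpha_1\dim_H B > 1$. The main obstacle is this density-integral estimate: the semistable scaling is only discrete in $c$, so one must push the uniform estimates of Section~2.2 through the change of variables uniformly in $m \in [1,c)$; the remaining bookkeeping (verifying that the exponent $1-\beta-1/\alpha_1$ matches the target at the critical value of $\beta$) is routine.
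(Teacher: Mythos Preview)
Your proposal is correct and, for cases (i), (iii) and (iv), follows the paper's route almost exactly: Lipschitz projection onto $X(B)$ combined with \eqref{DimensionRange} for (i) and (iii), and the Frostman energy method with the one-dimensional component $X^{(1)}$ for (iv). Your integral estimate in (iv) differs only cosmetically from the paper's: you bound the density of $X^{(1)}(m)$ uniformly via Lemma~\ref{uniconfun} and split the resulting integral at the natural scale $\tau^{1-1/\alpha_1}$, whereas the paper carries out an integration by parts against the distribution function $F_m$; both deliver the same bound $K\tau^{-(\beta-1+1/\alpha_1)}$, and both need $\beta>1$ for convergence at infinity.

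The one genuine difference is case (ii). You dispose of it in one line via the Lipschitz projection $(t,x)\mapsto t$ onto $B$, which is perfectly valid and buys you brevity. The paper instead runs the full energy method for (ii) as well: it fixes $\gamma<\dim_H B$, takes a Frostman measure $\sigma$, and bounds $\Exp[\|Z(s)-Z(t)\|^{-\gamma}]$ by $|s-t|^{-\gamma}$ on $\{|s-t|\leq 1\}$ (trivially, from the time coordinate) and by a constant on $\{|s-t|\geq 1\}$ (using Lemma~\ref{uniexp} applied to $X^{(1)}$). Your shortcut is cleaner here; the paper's longer argument gains nothing extra in this case.
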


\begin{proof}
	(i)+(iii) Since projections are Lipschitz continuous, we have $\dim_H Gr_X(B)\geq \dim_H X(B)$. Hence, the desired lower bounds in these two parts can be deduced from the dimension result \eqref{DimensionRange} for the range of an operator semistable process.
	
	(ii) Choose $0<\gamma<\dim_HB\leq1$. Then by Frostman's lemma there exists a probability measure $\sigma$ on $B$ such that
	\begin{equation}\label{sigma}
	\int_B\int_B \frac{\sigma(ds)\sigma(dt)}{|s-t|^{\gamma}}<\infty.
	\end{equation}
	In order to prove $\dim_H Gr_X(B) = \dim_H Z(B)\geq\gamma$ almost surely, by Frostman's theorem \cite{Kah, Mat} it suffices to show that
	\begin{equation}\label{Xsigma}
	\int_{B}\int_{B}\Exp\left[\|Z(s)-Z(t)\|^{-\gamma}\right]\,\sigma(ds)\,\sigma(dt)<\infty.
	\end{equation}
	Let
	$$
	K_{11} = \sup_{m\in[1,c)}\Exp\left[\|X^{(1)}(m)\|^{-\gamma}\right]<\infty
	$$
	by Lemma \ref{uniexp}, since $\gamma<1\leq d_1$. In order to verify \eqref{Xsigma} we split the domain of integration into two parts.
	\begin{itemize}
		\item[(a)] Assume $|s-t|\leq1$. Then
		\begin{align*}
		\Exp\left[\left\|\begin{pmatrix} t\\X(t) \end{pmatrix}-\begin{pmatrix} s\\X(s) \end{pmatrix}\right\|^{-\gamma}\right]\leq \Exp\left[|s-t|^{-\gamma}\right]=|s-t|^{-\gamma}.
		\end{align*}
		\item[(b)] Now assume $|s-t|\geq 1$ and choose $\alpha_{1}'>\alpha_{1}$. Write $|s-t|=mc^{i}$ with $m\in[1,c)$ and $i\in\mathbb{N}_{0}$. Using again Lemma \ref{specbound} we get
		\begin{align*}
		&\Exp\left[\left\|\begin{pmatrix} t\\X(t) \end{pmatrix}-\begin{pmatrix} s\\X(s) \end{pmatrix}\right\|^{-\gamma}\right]\leq\Exp\left[\|X^{(1)}(t)-X^{(1)}(s)\|^{-\gamma}\right]\\
		& = \|c^{-iE_{1}}\|^{\gamma} \Exp\left[\|X^{(1)}(m)\|^{-\gamma}\right] \leq K\,c^{-\gamma i/\alpha_{1}'} K_{11}\leq K\,K_{11} = K_{12}.
		\end{align*}
	\end{itemize}
	Combining part (a) and (b) in \eqref{Xsigma} and applying \eqref{sigma} we get the desired upper bound.
	
	(iv) Assume $\alpha_1\dim_HB>d_1$ then $\alpha_1>d_1=1$. Choose $1<\gamma<1+\dim_HB-\frac1{\alpha_1}$, then $\rho=\gamma-1+\frac1{\alpha_1}<\dim_HB$. By Frostman's lemma, there exists again a probability measure $\sigma$ on $B$ such that
	$$
	\int_B\int_B \frac{\sigma(ds)\sigma(dt)}{|s-t|^{\rho}}<\infty.
	$$
	Again, in order to verify \eqref{Xsigma} we split the domain of integration into two parts.
	
	First assume that $|s-t|=mc^{-i}\leq1$ with $m\in[1,c)$ and $i\in\nat_0$. Since $d_1=1$ we get
	\begin{align*}
	&\Exp\left[\left\|\begin{pmatrix} t\\X(t) \end{pmatrix}-\begin{pmatrix} s\\X(s) \end{pmatrix}\right\|^{-\gamma}\right]\leq\Exp\left[\left(c^{-i\frac{2}{\alpha_1}}\cdot|X^{(1)}(m)|+|s-t|^2\right)^{-\frac\gamma2}\right]\\
	&\leq K\int_\rr \;\frac1{c^{-i\frac{\gamma}{\alpha_1}}\cdot|x_1|^\gamma+|s-t|^\gamma}\cdot g_m(x_1)dx_1\\
	&=K\int_\rr \;\frac1{m^{-\frac{\gamma}{\alpha_1}}\left(mc^{-i}\right)^{\frac{\gamma}{\alpha_1}}\cdot|x_1|^\gamma+|s-t|^\gamma}\cdot g_m(x_1)dx_1\\
	&\leq K\int_\rr \;\frac1{c^{-\frac{\gamma}{\alpha_1}}\cdot|s-t|^{\frac{\gamma}{\alpha_1}}|x_1|^\gamma+|s-t|^\gamma}\cdot g_m(x_1)dx_1\\
	&\leq K\int_\rr \;\frac1{|s-t|^{\frac{\gamma}{\alpha_1}}|x_1|^\gamma+|s-t|^\gamma}\cdot g_m(x_1)dx_1\\
	&=K\cdot|s-t|^{-\frac{\gamma}{\alpha_1}}\int_\rr \;\frac1{|x_1|^\gamma+|s-t|^{\gamma(1-\frac1{\alpha_1})}}\cdot g_m(x_1)dx_1 =: K\cdot|s-t|^{-\frac{\gamma}{\alpha_1}} \cdot I_m,
	\end{align*}
	where $g_m(x_1)$ is the density function of $X^{(1)}(m)$. Let
	$$
	F_m(r_1)=\Prob\left(|X^{(1)}(m)|\leq r_1\right)=\int_{|x_1|\leq r_1}g_m(x_1)dx_1
	$$
	and note that by Lemma \ref{uniconfun} 
	$$
	\sup_{m\in[1,c)}\sup_{x_1\in\rr}|g_m(x_1)|\leq K_8<\infty.
	$$
	This leads to
	$$
	F_m(r_1)\leq 1 \wedge 2K_8\cdot r_1 \quad \forall r_1\geq0 \text{ and } \forall m\in[1,c).
	$$
	We denote $z=|s-t|^{1-\frac1{\alpha_1}}$. By using integration by parts, we deduce
	\begin{align*}
	I_m &= \int_0^\infty\! \frac1{r_1^{\gamma}+z^{\gamma}}\;F_m(dr_1)\\
	&= \left[\frac1{r_1^{\gamma}+z^{\gamma}}\; F_m(r_1)\right]_0^\infty + \int_0^\infty\! \frac{\gamma r_1^{\gamma-1}}{\left(r_1^{\gamma}+z^{\gamma}\right)^2}\;F_m(r_1)dr_1\\
	&\leq K \int_0^\infty\! \frac{\gamma r_1^{\gamma-1}}{\left(r_1^{\gamma}+z^{\gamma}\right)^2}\;r_1 dr_1 = K \int_0^\infty\! \frac{\gamma r_1^{\gamma}}{\left(r_1^{\gamma}+z^{\gamma}\right)^2}\;dr_1\\
	&= K \int_0^\infty\! \frac{z\gamma\cdot (zs_1)^{\gamma}}{\left((zs_1)^{\gamma}+c^{\gamma}\right)^2}\;ds_1\\
	&= K z^{-(\gamma-1)}\cdot\int_0^\infty\!\frac{\gamma s_1^{\gamma}}{\left(s_1^{\gamma}+1\right)^2}\;ds_1\\
	&\leq K z^{-(\gamma-1)} = K\;|s-t|^{-(\gamma-1)(1-\frac1{\alpha_1})},
	\end{align*}
	where the last integral is finite since $\gamma>1$. Together we get for $|s-t|\leq1$
	\begin{align*}
	&\Exp\left[\left\|\begin{pmatrix} t\\X(t) \end{pmatrix}-\begin{pmatrix} s\\X(s) \end{pmatrix}\right\|^{-\gamma}\right]\leq K\;|s-t|^{-\gamma+1-\frac1{\alpha_1}}= K\;|s-t|^{-\rho}.
	\end{align*}
	For $|s-t|=mc^i\geq 1$ with $m\in[1,c)$ and $i\in\mathbb{N}_{0}$ choose $\alpha_{2}'>\alpha_{2}$. Then by Lemma \ref{specbound} we have
	\begin{align*}
	&\sup_{|s-t|\geq1}\Exp\left[\left\|\begin{pmatrix} t\\X(t) \end{pmatrix}-\begin{pmatrix} s\\X(s) \end{pmatrix}\right\|^{-\gamma}\right]\leq\sup_{|s-t|\geq1}\Exp\left[\|X(t)-X(s)\|^{-\gamma}\right]\\
	&\leq \sup_{m\in[1,c)}\; \sup_{i\in\nat_0}\; \Exp\left[\|X(mc^{i})\|^{-\gamma}\right]\\
	& \leq \sup_{m\in[1,c)}\; \sup_{i\in\nat_0}\;\Exp\left[\left(c^{i\frac{2}{\alpha_{1}}}|X^{(1)}(m)|^{2} + c^{i\frac{2}{\alpha_{2}'}}\|X^{(2)}(m)\|^{2} \right)^{-\frac{\gamma}{2}}\right]\\
	& \leq \sup_{m\in[1,c)}\; \sup_{i\in\nat_0}\;\Exp\left[\|(X^{(1)}(m),X^{(2)}(m))\|^{-\gamma}\right]\\
	&= \sup_{m\in[1,c)}\; \Exp\left[\|(X^{(1)}(m),X^{(2)}(m))\|^{-\gamma}\right]\leq K_{16}<\infty
	\end{align*}
	uniformly in $m\in[1,c)$ in view of Lemma \ref{uniexp}, since $\gamma<2\leq 1+d_{2}$. Therefore it follows from the calculations above that
	$$
	\int_B\int_B\Exp\left[\left\|\begin{pmatrix} t\\X(t) \end{pmatrix}-\begin{pmatrix} s\\X(s) \end{pmatrix}\right\|^{-\gamma}\right]\sigma(ds)\sigma(dt)<\infty.
	$$
	Using Frostman's theorem we have
	$$
	\dim_HGr_X(E)\geq\gamma.
	$$
	Since $\gamma<1+\dim_HB-\frac1{\alpha_1}$ was arbitrary this concludes the proof.
\end{proof}

\subsection{Proof of Main Results}
Theorem \ref{mainresult1} now follows directly from Lemma \ref{upperbounds} and Lemma \ref{lowerbounds}. It remains to prove the corresponding dimension result for the one-dimensional case as stated in Theorem \ref{mainresult2}. For $\alpha\dim_H B\leq 1$ Lemma \ref{upperbounds} and \ref{lowerbounds} are still valid for $d=1$ with $\alpha_1 := \alpha$. In case $\alpha\dim_H B > 1 = d$ the proof runs analogously to Lemma \ref{upperbounds} part (iv) and Lemma \ref{lowerbounds} part (iv).

\bibliographystyle{plain}

\end{document}